\begin{document}
\theoremstyle{plain}
\newtheorem{Definition}{Definition}[section]
\newtheorem{Proposition}{Proposition}[section]
\newtheorem{Property}{Property}[section]
\newtheorem{Theorem}{Theorem}[section]
\newtheorem{Lemma}[Theorem]{\hspace{0em}\bf{Lemma}}
\newtheorem{Corollary}[Theorem]{Corollary}
\newtheorem*{Remark}{Remark}

\baselineskip 15pt

\noindent  {\LARGE The Kobayashi pseudometric for the Fock-Bargmann-Hartogs domain \\\\
 and its application}\\\\

\noindent\text{Enchao Bi$^{1}$\; \&  \; Guicong Su$^{2}$ \; \&  \; Zhenhan Tu$^{2*}$}\\

 \noindent\small {${}^1$ School of Mathematics and Statistics, Qingdao University, Qingdao, Shandong 266071, P.R. China}\\

 \noindent\small {${}^2$ School of Mathematics and Statistics, Wuhan
University, Wuhan, Hubei 430072, P.R. China } \\

\noindent\text{Email: bienchao@whu.edu.cn (E. Bi),\;
suguicong@whu.edu.cn (G. Su),\;
zhhtu.math@whu.edu.cn (Z. Tu)}

\renewcommand{\thefootnote}{{}}
\footnote{\hskip -16pt {$^{*}$Corresponding author. \\ } }
\\

\normalsize \noindent\textbf{Abstract}\quad {The Fock-Bargmann-Hartogs domain $D_{n,m}$
 in $\mathbb{C}^{n+m}$ is defined by the inequality $\|w\|^2<e^{-\|z\|^2},$  where $(z,w)\in \mathbb{C}^n\times \mathbb{C}^m$, which is an unbounded non-hyperbolic domain
 in $\mathbb{C}^{n+m}$.
This paper mainly consists of three parts. Firstly, we give the explicit expression of geodesics of $D_{n,1}$  in the sense of
Kobayashi pseudometric; Secondly, using the formula of geodesics, we calculate explicitly the Kobayashi pseudometric on $D_{1,1}$; Lastly, we establish the Schwarz lemma at the boundary for holomorphic mappings between the nonequidimensional  Fock-Bargmann-Hartogs domains by using the formula for the Kobayashi pseudometric on $D_{1,1}$.
\vskip 10pt

\noindent \textbf{Key words:} Fock-Bargmann-Hartogs domains  \textperiodcentered \; Kobayashi pseudometric \textperiodcentered \;
Boundary Schwarz lemma
\vskip 10pt

\noindent \textbf{Mathematics Subject Classification (2010):} 32F45
  \textperiodcentered \, 32H02  \textperiodcentered \, 30C80

\section{Introduction}
Let $\mathbb{C}^n$ be the $n$-dimensional complex Hilbert space with the inner product and the norm given by
\begin{equation*}
\langle z,w\rangle=\sum_{j=1}^{n}z_j\overline{w_j},\ \ \ \Vert z\Vert=(\langle z,z\rangle)^{\frac{1}{2}}
\end{equation*}
where $z,w\in\mathbb{C}^{n}$. Throughout this paper, we write a point $z\in\mathbb{C}^n$ as a row vector in the following $1\times n$ matrix form
\begin{equation*}
z=(z_1, \ldots, z_n)
\end{equation*}

By $E$ we denote the unit disk in $\mathbb{C}$. Let $D$ be a domain in $\mathbb{C}^n$. For $(z,
\zeta)\in D\times\mathbb{C}^n$ we define
\begin{Definition}
The Kobayashi pseudometric on $D$ is the function $\mathcal{K}_{D}:D\times\mathbb{C}^n\rightarrow\mathbb{R}^{+}\cup\{0\}$ defined by
\begin{equation}\label{eq1}
\mathcal{K}_{D}(z,\zeta)=\inf\limits_{f}\big\{\vert\alpha\vert: \exists f : E\rightarrow D\   holomorphic, f(0)=z,f'(0)\alpha=\zeta\big\}.
\end{equation}
\end{Definition}

\begin{Definition}
A holomorphic mapping $\varphi:E\rightarrow D$ is said to be a $\mathcal{K}_{D}$-geodesic for $(z,\zeta)$ if $\varphi$ is a function achieving the minimum in \eqref{eq1}.
\end{Definition}

 It is well known that if $D$ is a taut domain then for any $(z,\zeta)\in D\times \mathbb{C}^n$, there exists a $\mathcal{K}_{D}$-geodesic for any $(z,\zeta)$ (see \cite{Jarnicki-Plug}).
 We know, if $D$ is convex domain, then any $\mathcal{K}_{D}$-geodesic $\varphi$ for $(z,\zeta)$ with $\zeta\neq 0$ is a $\mathcal{K}_{D}$-geodesic for any $(\varphi(\lambda),\varphi^{'}(\lambda))$ $(\lambda\in E)$, and moreover, the Kobayashi pseudometric and the Carath\'{e}dory pseudometric on $D$ coincide (see \cite{Lempert}). In this case any $\mathcal{K}_{D}$-geodesic for some $(z,\zeta)$ with $\zeta\neq 0$ is called a complex geodesic (see \cite{Ves}) also.

 In 1994, Pflug-Zwonek \cite{Plug-Zwonek} considered  complex ellipsoids $\varepsilon(p)$ (a class of weakly pesudoconvex domains) defined by
 \begin{equation*}
 \varepsilon(p) :=\big\{\vert z_1\vert^{2p_1}+\ldots+\vert z_n\vert^{2p_n}<1\big\}\subset\mathbb{C}^n,\;(n\geq 2)
 \end{equation*}
where $p=(p_1,\ldots,p_n)$ with $p_j>0$ $(1\leq j\leq n)$. It is well known that the complex ellipsoids are taut domains, and, they are convex if and only if $p_j\geq\frac{1}{2}$ for $j=1,\ldots,n$. Moreover, $\partial\varepsilon(p)$ is $\mathcal{C}^{\omega}$ and strongly pseudoconvex at all boundary points $z\in(\partial\varepsilon(p))\cap(\mathbb{C}_{*})^n$ (where $\mathbb{C}_{*}=\mathbb{C}\backslash \{0\}$). Pflug-Zwonek \cite{Plug-Zwonek} gave a necessary condition for $\mathcal{K}_{\varepsilon(p)}$-geodesic in complex ellipsoids for all $p_j>0$ $(1\leq j\leq n)$ as follows.

 \begin{Theorem}[see \cite{Plug-Zwonek}]\label{Th1.1}
 Let $\varphi: E\rightarrow \varepsilon(p)$ be a $\mathcal{K}_{\varepsilon(p)}$-geodesic for $(\varphi(0),\varphi'(0))$ with $\varphi'(0)\neq0$, where $\varphi_j\not\equiv0$ $(1\leq j\leq n)$. Then we have
 \begin{equation}
 \varphi_j(\lambda)=B_j(\lambda)\bigg(a_j\frac{1-\overline{\alpha_j}\lambda}{1-\overline{\alpha_0}\lambda}\bigg)^{\frac{1}{p_j}},
 \end{equation}
 where $B_j(\lambda)$ is the Blaschke product and $\alpha_j, \alpha_0, a_j$ fulfill the following relations
 \begin{equation}\label{eq3}
 a_j\in\mathbb{C_{*}}, \alpha_j\in\overline{E}\; (1\leq j\leq n) \textrm{ and } \alpha_0\in E,
 \end{equation}
 \begin{equation}\label{eq4}
 \alpha_0=\sum\limits_{j=1}^n\vert a_j\vert^2\alpha_j,\; 1+\vert \alpha_0\vert^2=\sum\limits_{j=1}^n\vert a_j\vert^2(1+\vert \alpha_j\vert^2).
 \end{equation}
 Moreover, if $p_j\geq\frac{1}{2}$ for some $j$, then we can choose either $B_j\equiv1$ or $B_j(\lambda)=\frac{\lambda-\alpha_j}{1-\overline{\alpha_j}\lambda}$ with $\vert \alpha_j\vert<1$.\\
Additionally, if $\vert \alpha_j\vert<1$ for all $1\leq j\leq n$, then either $B_j\equiv1$ or $B_j(\lambda)=\frac{\lambda-\alpha_j}{1-\overline{\alpha_j}\lambda}$ for all  $j=1,\ldots,n$.
 \end{Theorem}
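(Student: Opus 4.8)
The plan is to reduce the description of the $\mathcal{K}_{\varepsilon(p)}$-geodesics to the classical description of the $\mathcal{K}_{\mathbb{B}^n}$-geodesics of the Euclidean ball $\mathbb{B}^n$, via the branched map $(z_1,\dots,z_n)\mapsto(z_1^{p_1},\dots,z_n^{p_n})$, once the boundary behaviour of the extremal disc has been pinned down. A few preliminaries first. Since $\varepsilon(p)$ is a bounded pseudoconvex Reinhardt domain it is taut, so a $\mathcal{K}_{\varepsilon(p)}$-geodesic $\varphi$ exists; extremality for $(\varphi(0),\varphi'(0))$ means precisely that no holomorphic $f\colon E\to\varepsilon(p)$ with $f(0)=\varphi(0)$ can satisfy $f'(0)=\beta\varphi'(0)$ with $|\beta|>1$, i.e. $\varphi$ maximises $|f'(0)|$ among competitors whose derivative at $0$ is parallel to $\varphi'(0)$. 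Boundedness of $\varepsilon(p)$ gives $\varphi_j\in H^\infty(E)$, so each $\varphi_j$ has radial boundary values $\varphi_j^{*}$ a.e.\ and a canonical inner--outer factorisation $\varphi_j=B_jS_jO_j$ (Blaschke product, singular inner factor, outer factor).

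The first substantial step is a boundary--filling lemma: extremality forces $\sum_{j=1}^n|\varphi_j^{*}(\lambda)|^{2p_j}=1$ for a.e.\ $\lambda\in\partial E$, and the singular factors $S_j$ are trivial. Here $u:=\log\sum_j|\varphi_j|^{2p_j}$ is subharmonic and $<0$ on $E$, since each $|\varphi_j|^{2p_j}$ is log-subharmonic and a finite sum of log-subharmonic functions is log-subharmonic; hence $u$ has radial limits $\le 0$, and the point is to exclude that they are $<0$ on a set of positive measure. When $p_j\ge\tfrac{1}{2}$ for all $j$ this is quickest through Lempert's theorem: $\varphi$ is then distance--realising, so the Kobayashi distance $k_{\varepsilon(p)}(\varphi(0),\varphi(\lambda))$ equals the Poincar\'e distance $k_E(0,\lambda)\to\infty$ as $\lambda\to\partial E$, which is impossible unless $\varphi(\lambda)$ tends to $\partial\varepsilon(p)$. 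In general one argues by contradiction: if $\varphi$ did not fill the boundary (or had a nontrivial $S_j$) one could perturb it -- precomposing by a suitable self--map of $E$, or modifying one component on the ``slack'' set -- so as to strictly enlarge $|\varphi'(0)|$ while staying in $\varepsilon(p)$ with the same $1$-jet at $0$, contradicting extremality. After this one may write $\varphi_j=B_jh_j$ with $h_j=O_j$ a zero--free outer function and $\log|h_j|=P[\log|\varphi_j^{*}|]$, the Poisson integral of the boundary modulus.

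Now the reduction. As $E$ is simply connected and $h_j$ is zero--free, $H_j:=h_j^{p_j}$ is a well defined bounded holomorphic function with no zeros in $E$; put $H:=(H_1,\dots,H_n)$. On $\partial E$ one has $|H_j|=|h_j|^{p_j}=|\varphi_j^{*}|^{p_j}$, so $\|H^{*}\|^2=\sum_j|\varphi_j^{*}|^{2p_j}=1$ a.e.; since $\log\|H\|^2$ is subharmonic and bounded above it is $\le P[\log\|H^{*}\|^2]=0$, so $H\colon E\to\mathbb{B}^n$ is a ball--inner map. The crux is to show that $H$ is a $\mathcal{K}_{\mathbb{B}^n}$-geodesic for $(H(0),H'(0))$: given a competitor $\widetilde H\colon E\to\mathbb{B}^n$ with the same value at $0$ but a strictly larger derivative, one pulls it back by $\widetilde\varphi_j:=B_j\,\widetilde H_j^{\,1/p_j}$ (choosing the branch agreeing with $\varphi_j$ at $0$); since $\sum_j|\widetilde\varphi_j|^{2p_j}=\sum_j|B_j|^{2p_j}|\widetilde H_j|^2\le\|\widetilde H\|^2<1$ the disc $\widetilde\varphi$ stays in $\varepsilon(p)$ and fixes $\varphi(0)$, and -- after co--varying the $B_j$ so as to keep $\widetilde\varphi'(0)$ parallel to $\varphi'(0)$ -- it beats $\varphi$, a contradiction. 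Once $H$ is known to be a geodesic of the convex ball it parametrises an affine complex line, $H(\lambda)=v_0+m(\lambda)v$ for a M\"obius self--map $m$ of $E$, so all the $H_j$ carry the \emph{same} denominator; writing $H_j(\lambda)=(\text{affine in }\lambda)/(1-\overline{\alpha_0}\lambda)$ and, since $H_j(0)=h_j(0)^{p_j}\neq0$, the numerator as $a_j(1-\overline{\alpha_j}\lambda)$ with $a_j\in\mathbb{C}_*$, one gets $\varphi_j=B_j\big(a_j\tfrac{1-\overline{\alpha_j}\lambda}{1-\overline{\alpha_0}\lambda}\big)^{1/p_j}$, which is the asserted form. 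Here $\alpha_0\in E$ because the common denominator is zero--free on $\overline E$ for a nondegenerate geodesic, and $\alpha_j\in\overline E$ because $H_j$, being $h_j^{p_j}$, has no zero in $E$, so its zero $1/\overline{\alpha_j}$ lies outside $E$; this is \eqref{eq3}. Finally \eqref{eq4} is exactly the identity $\|H^{*}(\lambda)\|^2=1$ on $|\lambda|=1$: written as $\sum_j|a_j|^2|1-\overline{\alpha_j}\lambda|^2=|1-\overline{\alpha_0}\lambda|^2$ and compared in the coefficients of $\lambda^0$ and of $\lambda$, it yields precisely $1+|\alpha_0|^2=\sum_j|a_j|^2(1+|\alpha_j|^2)$ and $\alpha_0=\sum_j|a_j|^2\alpha_j$. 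For the two ``moreover'' assertions: when $p_j\ge\tfrac{1}{2}$ the $j$-th coordinate slice of $\varepsilon(p)$ is convex and a further extremality argument shows that $B_j$ can carry at most one zero (a product of two Blaschke factors can be traded for a single factor times a unimodular constant, with the outer part readjusted, without leaving $\varepsilon(p)$ or changing the $1$-jet), and the last statement is the same argument carried out in each coordinate.

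The main obstacle is this middle block: the boundary--filling lemma, and above all the transfer of extremality from $\varphi$ on $\varepsilon(p)$ to $H$ on $\mathbb{B}^n$. The real difficulty there is the bookkeeping imposed by the non--integer exponents $1/p_j$ together with the possible zeros of the components: one must guarantee that every pulled--back or perturbed disc is single--valued (so that $\widetilde H_j^{\,1/p_j}$, or the modifying factors, are genuinely zero--free on $E$), stays inside $\varepsilon(p)$, and keeps the prescribed value and direction at $0$ -- which forces one to vary the Blaschke factors $B_j$ in tandem with the outer parts -- and one must separately treat the degenerate directions in which the passage $\varphi\mapsto H$ loses information, e.g.\ when some $\varphi_j$ is constant or vanishes at $0$ so that part of $\varphi'(0)$ is carried by $B_j$. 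Everything downstream -- reading off the linear--fractional form and deriving \eqref{eq3}--\eqref{eq4} -- is elementary.
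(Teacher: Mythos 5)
First, a point of orientation: the paper does not prove Theorem \ref{Th1.1} at all; it is quoted verbatim from Pflug--Zwonek and used as a black box (indeed it is invoked inside the proof of Theorem \ref{Th1.3} to classify the ball geodesics $h_j$). The only internal comparison available is the paper's proof of the analogous Theorem \ref{Th1.3} for $D_{n,1}$, and your strategy matches that template: strip Blaschke factors by a decomposition lemma, transport the zero-free parts to a convex model where the geodesics are known, then pin down the residual Blaschke factor. Your endgame is also correct: reading off \eqref{eq4} from $\sum_j|a_j|^2|1-\overline{\alpha_j}\lambda|^2=|1-\overline{\alpha_0}\lambda|^2$ on $|\lambda|=1$ by comparing the coefficients of $1$ and $\bar\lambda$ is exactly how those relations arise.

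However, the two steps you yourself single out as ``the main obstacle'' are genuine gaps, not bookkeeping. (1) The transfer of extremality from $\varphi$ to $H=(h_1^{p_1},\dots,h_n^{p_n})$: your pull-back $\widetilde\varphi_j=B_j\widetilde H_j^{\,1/p_j}$ of a ball competitor is simply undefined when $\widetilde H_j$ has a zero in $E$, and it can (only $\widetilde H_j(0)\neq 0$ is guaranteed, and even the extremal affine discs of the ball have components vanishing in $E$); one must first peel off the competitor's zeros as Blaschke factors, take roots of the zero-free remainder, and re-establish $\sum_j|\cdot|^{2p_j}<1$ by a log-subharmonicity/maximum-principle argument --- none of which is in your text. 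Moreover, when $B_j(0)=0$ the $1$-jet of $\varphi$ at $0$ is not determined by the $1$-jet of $H$, so ``co-varying the $B_j$'' to preserve the direction of $\varphi'(0)$ is precisely the content of the decomposition lemma (the analogue of Lemma \ref{Le2.2}) and needs a proof, not a parenthesis. Note that in the $D_{n,1}$ case the paper's pull-back goes through an exponential and is therefore unconditionally single-valued; the ellipsoid case does not have that luxury, which is why this step is the crux. (2) The ``moreover'' clauses --- that for $p_j\ge\frac12$ the factor $B_j$ is at most a single Blaschke factor, and that its zero sits at the \emph{same} $\alpha_j$ appearing in $1-\overline{\alpha_j}\lambda$ --- do not follow from your ``trade two Blaschke factors for one and readjust the outer part'' heuristic: that manoeuvre produces neither a strictly better competitor nor a contradiction. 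The actual mechanism is the stationarity/dual-map identity $\widetilde{\varphi_j}\varphi_j=s_j(\lambda-\gamma_j)(1-\overline{\gamma_j}\lambda)$ obtained from Pang's criterion together with the lemma on $H^1$ functions with $\frac{1}{\lambda}f^*(\lambda)>0$; it bounds the number of zeros of $\varphi_j$ in $E$ by one and locates it, and it is exactly how the paper closes the corresponding step in Theorem \ref{Th1.3}. Your boundary-filling lemma is also only sketched, but that is the least serious of the three issues, since once the extremality transfer is run by contrapositive the unimodularity of $\|H^*\|$ falls out of the explicit form of the ball geodesics.
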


Also they proved the geodesic is almost proper (refer to \cite{Plug-Zwonek}), that is,
\begin{Corollary}[see \cite{Plug-Zwonek}]\label{Le1.2}
 Let $\varphi: E\rightarrow \varepsilon(p)$ be a $\mathcal{K}_{\varepsilon(p)}$-geodesic for $(\varphi(0),\varphi'(0))$ with $\varphi'(0)\neq0$. Then $\varphi^{*}(\partial E)\subseteq \partial \varepsilon(p)$ where $\varphi^{*}$ denote the boundary value of $\varphi$.
\end{Corollary}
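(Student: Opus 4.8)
The plan is to substitute the explicit representation of geodesics given by Theorem~\ref{Th1.1} into the defining inequality of $\varepsilon(p)$ and check that equality holds identically on $\partial E$. First I would dispose of the degenerate coordinates. If $\varphi_j\equiv 0$ for $j$ in some proper subset $S\subsetneq\{1,\dots,n\}$ (the case $S=\{1,\dots,n\}$ is excluded by $\varphi'(0)\neq 0$), then $\varphi$ takes values in the coordinate slice $\varepsilon(p)\cap\{z_j=0:j\in S\}$, which is a holomorphic retract of $\varepsilon(p)$ via the obvious coordinate projection and is itself a lower–dimensional complex ellipsoid; by the contracting property of the Kobayashi pseudometric it is Kobayashi-isometrically embedded, so $\varphi$ is also a $\mathcal{K}$-geodesic of this slice, and an induction on $n$ reduces everything to the non-degenerate case $\varphi_j\not\equiv 0$ for all $j$, where Theorem~\ref{Th1.1} applies verbatim.

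In the non-degenerate case, since $\varphi(E)\subseteq\varepsilon(p)$ we have $|\varphi_j|<1$ on $E$, hence $\varphi_j\in H^\infty(E)$ and Fatou's theorem yields non-tangential boundary values $\varphi_j^{*}(\zeta)$ for a.e. $\zeta\in\partial E$. Writing $\varphi_j(\lambda)=B_j(\lambda)\big(a_j\tfrac{1-\overline{\alpha_j}\lambda}{1-\overline{\alpha_0}\lambda}\big)^{1/p_j}$ as in Theorem~\ref{Th1.1}, using the standard fact that $|B_j^{*}(\zeta)|=1$ for a.e.\ $\zeta\in\partial E$, and using $|\zeta|=1$, one gets for a.e.\ $\zeta\in\partial E$
\begin{equation*}
|\varphi_j^{*}(\zeta)|^{2p_j}=|a_j|^2\,\frac{|1-\overline{\alpha_j}\zeta|^2}{|1-\overline{\alpha_0}\zeta|^2}
=|a_j|^2\,\frac{1+|\alpha_j|^2-2\,\mathrm{Re}(\overline{\alpha_j}\zeta)}{|1-\overline{\alpha_0}\zeta|^2},
\end{equation*}
where the relation $\alpha_0\in E$ from \eqref{eq3} guarantees that $1-\overline{\alpha_0}\zeta\neq 0$ on $\partial E$, so the right-hand side is well defined.

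Summing over $j=1,\dots,n$ and invoking the two identities in \eqref{eq4}, namely $\sum_j|a_j|^2(1+|\alpha_j|^2)=1+|\alpha_0|^2$ and $\sum_j|a_j|^2\alpha_j=\alpha_0$ (hence $\sum_j|a_j|^2\,\mathrm{Re}(\overline{\alpha_j}\zeta)=\mathrm{Re}(\overline{\alpha_0}\zeta)$), the numerator collapses to $1+|\alpha_0|^2-2\,\mathrm{Re}(\overline{\alpha_0}\zeta)=|1-\overline{\alpha_0}\zeta|^2$, so
\begin{equation*}
\sum_{j=1}^{n}|\varphi_j^{*}(\zeta)|^{2p_j}=1\qquad\text{for a.e. }\zeta\in\partial E,
\end{equation*}
which is precisely $\varphi^{*}(\partial E)\subseteq\partial\varepsilon(p)$ (including the degenerate coordinates, which contribute $0$ to both sides). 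The only step that is not purely formal is the reduction to the non-degenerate case — i.e. recognizing a coordinate slice of $\varepsilon(p)$ as a Kobayashi-isometrically embedded lower-dimensional ellipsoid so that Theorem~\ref{Th1.1} can be applied — together with the care needed to handle boundary values of possibly infinite Blaschke products; the algebraic identity itself is immediate from \eqref{eq4}.
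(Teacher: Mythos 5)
Your derivation is correct: the paper itself gives no proof of this corollary (it is quoted from \cite{Plug-Zwonek}), but deducing it from the representation in Theorem \ref{Th1.1} by computing $\sum_j|\varphi_j^*(\zeta)|^{2p_j}$ on $\partial E$ and collapsing the numerator via the two identities in \eqref{eq4} is exactly the intended route, and it parallels the paper's own proof of the analogous statement for $D_{n,1}$ (Lemma \ref{Le2.4}/Corollary \ref{Le1.4}), where the explicit boundary form of the geodesic is likewise plugged into the defining function. Your handling of the degenerate coordinates via the coordinate-projection retract (so that the slice is Kobayashi-isometrically embedded and $\varphi$ remains a geodesic there), and of possibly infinite Blaschke products via a.e.\ unimodular radial limits, closes the only gaps one could worry about; just note that the one-variable base case of your reduction is the unit disc, where extremal discs are automorphisms and the boundary statement is immediate.
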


In 1992, Blank-Fan-Klein-Krantz-Ma-Pang \cite{B}  delivered an effective formula of the Kobayashi metric in the convex ellipsoids $\varepsilon(p)$ for $p=(1,m)$ (i.e., $m \geq 1/2$).
By using the condition for $\mathcal{K}_{\varepsilon(p)}$-geodesic (i.e., Theorem \ref{Th1.1}),
Pflug-Zwonek \cite{Plug-Zwonek} obtained the formulas of the Kobayashi metric in the non-convex complex ellipsoids $\varepsilon(p)$ for $p=(1,m)$ (i.e., $m < 1/2$) in 1994.

The Fock-Bargmann-Hartogs domain $D_{n,m}$ is defined by
\begin{equation}
D_{n,m}:=\big\{(z,w)\in \mathbb{C}^{n} \times \mathbb{C}^m: \Vert w\Vert^{2}<e^{-{\Vert z\Vert}^{2}}\big\}.
\end{equation}
The Fock-Bargmann-Hartogs domains $D_{n,m}$ are strongly pseudoconvex with smooth real-analytic boundary. We note that each $D_{n,m}$ contains $\{(z, 0)\in \mathbb{C}^n\times\mathbb{C}^m\} \cong \mathbb{C}^n$. Thus each $D_{n,m}$ is not hyperbolic in the sense of Kobayashi and $D_{n,m}$ can not be biholomorphic to any bounded domain in $\mathbb{C}^{n+m}$. Therefore, each Fock-Bargmann-Hartogs domain $D_{n,m}$ is an unbounded non-hyperbolic domain in $\mathbb{C}^{n+m}$.

As we know, in the case that a domain in $\mathbb{C}^{n}$ is unbounded, or more generically, non-hyperbolic strongly pseudoconvex (in the sense that the Levi form for the defining function is strictly positive for each boundary point, see section 2.1 in \cite{KYZ}),
we can no longer expect that the geometric and analytic properties of the domain is
as good as in the bounded case. Therefore, the study of Fock-Bargmann-Hartogs domain $D_{n,m}$ attracts lots of attentions recently.
Yamamori \cite{Y} gave an explicit formula for the Bergman kernel of the Fock-Bargmann-Hartogs domain $D_{n,m}$ in terms of the polylogarithm functions in 2013. In 2014, Kim-Ninh-Yamamori \cite{Kim-Ninh-Yamamori} determined the full holomorphic automorphisms of the Fock-Bargmann-Hartogs domain $D_{n,m}$ and it turns out that the automorphism group is non-compact and the domain $D_{n,m}$ isn't homogeneous.
In 2015, Tu-Wang \cite{TW} proved the rigidity of proper holomorphic mappings between two equidimensional Fock-Bargmann-Hartogs domains, which implies that any proper holomorphic self-mapping on the Fock-Bargmann-Hartogs domain $D_{n,m}$ with $m\geq 2$ must be an automorphism.
In 2016, Bi-Feng-Tu \cite{BFT} obtained an explicit formula for the Bergman kernel of the weighted Hilbert space of
square integrable holomorphic functions on $D_{n,m}$, and furthermore, use the explicit expression to prove the existence of balanced metrics for a class of Fock-Bargmann-Hartogs domains.

Recently, Kim-Yamamori-Zhang \cite{KYZ} studied the invariant metrics on  $D_{n,m}$, and obtained the Bergman and
K\"ahler-Einstein metrics on  $D_{n,m}$  are metrically equivalent, and, determined the comparisons among
the Carath\'eodory and Kobayashi pseudometrics on $D_{n,m}$ also.  But, since  Kim-Yamamori-Zhang \cite{KYZ} do not know
the explicit form of geodesic on  $D_{1,1}$ in general (see the appendix of \cite{KYZ}), the Kobayashi pseudometric on $D_{1,1}$ cannot be calculated explicitly in Kim-Yamamori-Zhang \cite{KYZ}.

In fact, the $\mathcal{K}_{D_{n,m}}$-geodesics for the Fock-Bargmann-Hartogs domain $D_{n,m}$ are quite different from the bounded complete Reinhardt domains, such as \cite{Vis1} and \cite{Vis2}, or from the minimal ball in $\mathbb{C}^n$ (\cite{P-Y}). However, by using the decomposition theorem for a mapping $f\in\mathcal{H}^\infty$, $f\not\equiv 0$, and the equivalent definition for stationary (e.g., see \cite{Pa}), this paper gives all $\mathcal{K}_{D_{n,1}}$-geodesics  on  $D_{n,1}$  are necessarily of the following form:
\begin{Theorem}\label{Th1.3}
Let $\varphi=(\varphi_1,\ldots,\varphi_{n+1}): E\rightarrow D_{n,1}$ be a $\mathcal{K}_{D_{n,1}}$-geodesic for $(\varphi(0),\varphi'(0))$ with
$\varphi'(0)\neq 0$, $\varphi_i\not\equiv 0$ $(1\leq i\leq n+1)$. Then we have
\begin{equation}
\varphi_j(\lambda)=\frac{\big(\frac{\lambda-\alpha_j}{1-\overline{\alpha_j}\lambda}\big)^{r_j}ia_j(1-\overline{\alpha_j}\lambda)}{(1-\overline{\alpha_0}
\lambda)-\big(\frac{\lambda-\alpha_{n+1}}{1-\overline{\alpha_{n+1}}\lambda}\big)^{r_{n+1}}a_{n+1}(1-\overline{\alpha_{n+1}}\lambda)},\;\; 1\leq j\leq n,
\end{equation}
and
\begin{equation}
\varphi_{n+1}(\lambda)=B(\lambda)\exp\bigg\{-\frac{1}{2}\frac{1-\overline{\alpha_0}
\lambda+\big(\frac{\lambda-\alpha_{n+1}}{1-\overline{\alpha_{n+1}}\lambda}\big)^{r_{n+1}}a_{n+1}(1-\overline{\alpha_{n+1}}\lambda)}{1-\overline{\alpha_0}
\lambda-\big(\frac{\lambda-\alpha_{n+1}}{1-\overline{\alpha_{n+1}}\lambda}\big)^{r_{n+1}}a_{n+1}(1-\overline{\alpha_{n+1}}\lambda)}\bigg\},
\end{equation}
where $r_j\in\{0,1\}$ and $r_j=1$ implies $\alpha_j\in E$. Moreover $\alpha_j,\;\alpha_0,\;a_j$ fulfill \eqref{eq3} and \eqref{eq4}.\\
Additionally, either $B(\lambda)\equiv 1$ or $B(\lambda)=\frac{\lambda-\gamma}{1-\overline{\gamma}\lambda}$ where $\gamma$ satisfies that if $r_{n+1}=1$, then $\gamma=\frac{\alpha_0+\overline{a_{n+1}}}{1+\overline{a_{n+1}\alpha_{n+1}}}$, and if $r_{n+1}=0$, then $\gamma=\frac{\alpha_0-\overline{a_{n+1}}\alpha_{n+1}}{1-\overline{a_{n+1}}}$
\end{Theorem}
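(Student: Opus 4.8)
The plan is to adapt the method of Pflug–Zwonek for complex ellipsoids (Theorem \ref{Th1.1}) to the Fock–Bargmann–Hartogs setting, where the crucial point is that although $D_{n,1}$ is neither bounded nor hyperbolic, the Kobayashi geodesic problem localizes to a genuinely bounded situation after a logarithmic change of the last coordinate. First I would start from a $\mathcal{K}_{D_{n,1}}$-geodesic $\varphi=(\varphi_1,\dots,\varphi_{n+1})$ and use the characterization of stationary maps (the variational/dual extremal condition, e.g.\ as in \cite{Pa}): there exist functions $\tilde p_j\in H^1(E)$, not all zero, and a real constant so that the boundary values satisfy, a.e.\ on $\partial E$, a Lagrange-type identity coming from the defining function $\rho(z,w)=\|w\|^2-e^{-\|z\|^2}$ of $D_{n,1}$. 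Concretely, writing $\psi:=-\tfrac12\|\varphi_1,\dots,\varphi_n\|^2-\log\varphi_{n+1}$ (so that $\varphi_{n+1}=B\,e^{\psi}$ up to a Blaschke factor $B$ absorbing the zeros), the stationarity condition forces $\lambda\,\overline{\partial\rho/\partial z_k}\cdot(\text{boundary data})$ to extend holomorphically, which after clearing denominators gives polynomial-type relations of degree one in $\lambda$ among the $\varphi_j$ and an auxiliary function. This is the step where I expect the real work to sit: one must justify that each factor $\varphi_j$ is in $\mathcal{H}^\infty$, apply the decomposition theorem for $f\in\mathcal H^\infty$, $f\not\equiv0$, into an inner (Blaschke, since we are on $E$ and the map is "almost proper" as in Corollary \ref{Le1.2}) times outer part, and then show the outer part is exactly a fractional power of a M\"obius transformation.

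Second, I would exploit that $D_{n,1}$ is, up to the biholomorphism $(z,w)\mapsto(z, w e^{\|z\|^2/2})$ locally, modeled on a product-like structure: the relations \eqref{eq3}–\eqref{eq4} governing $\alpha_j,\alpha_0,a_j$ are precisely the "balancing" conditions that arise from normalizing $\sum|a_j|^2=$ the appropriate trace, exactly as in \eqref{eq4} for the ellipsoid; I would derive them by evaluating the stationarity identity and its derivative at the origin (or by integrating the boundary identity against $1$ and against $\lambda$). The ansatz $r_j\in\{0,1\}$ with $r_j=1\Rightarrow\alpha_j\in E$ reflects the two admissible choices of Blaschke factor in Theorem \ref{Th1.1} (either $B_j\equiv1$ or a single Blaschke factor $\tfrac{\lambda-\alpha_j}{1-\overline{\alpha_j}\lambda}$); the novelty here is the last coordinate, where instead of a power we get an exponential, and the content is to check that
$$
\varphi_{n+1}(\lambda)=B(\lambda)\exp\Bigl\{-\tfrac12\,\frac{1-\overline{\alpha_0}\lambda+g(\lambda)}{1-\overline{\alpha_0}\lambda-g(\lambda)}\Bigr\},\qquad g(\lambda):=\Bigl(\tfrac{\lambda-\alpha_{n+1}}{1-\overline{\alpha_{n+1}}\lambda}\Bigr)^{r_{n+1}}a_{n+1}(1-\overline{\alpha_{n+1}}\lambda),
$$
is forced: differentiating $\log\varphi_{n+1}$ and matching it against the half-sum over $|\varphi_j|^2$ on $\partial E$ (which, on the boundary, equals $-\log|\varphi_{n+1}|$ since $\varphi^*(\partial E)\subset\partial D_{n,1}$) pins down $\psi$ as a M\"obius function of $\lambda$ with the stated denominator $1-\overline{\alpha_0}\lambda-g(\lambda)$ shared by all the $\varphi_j$.

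Third, I would determine the Blaschke factor $B$ in $\varphi_{n+1}$. Since $\varphi_{n+1}\not\equiv0$ and is bounded, $B$ is a finite Blaschke product; a degree count using the boundary relation $\sum_{j=1}^n|\varphi_j|^2+|\varphi_{n+1}|^2 e^{\|(\varphi_1,\dots,\varphi_n)\|^2}=1$ together with the already-established forms of $\varphi_1,\dots,\varphi_n$ shows $\deg B\le1$, so $B\equiv1$ or $B=\tfrac{\lambda-\gamma}{1-\overline\gamma\lambda}$. To locate $\gamma$ I would use that $\varphi_{n+1}$ can only vanish where $\exp\{\psi\}$ has a pole, i.e.\ where the common denominator $1-\overline{\alpha_0}\lambda-g(\lambda)$ vanishes inside $E$; solving $1-\overline{\alpha_0}\lambda=g(\lambda)$ in the two cases $r_{n+1}=1$ and $r_{n+1}=0$ gives the linear equations whose roots are exactly $\gamma=\tfrac{\alpha_0+\overline{a_{n+1}}}{1+\overline{a_{n+1}\alpha_{n+1}}}$ and $\gamma=\tfrac{\alpha_0-\overline{a_{n+1}}\alpha_{n+1}}{1-\overline{a_{n+1}}}$ respectively. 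The main obstacle throughout is the first step: transferring the Pflug–Zwonek stationarity machinery to an unbounded, non-hyperbolic domain, one must carefully argue that the geodesic is "almost proper" (boundary values land in $\partial D_{n,1}$, the analogue of Corollary \ref{Le1.2}) and that all component functions lie in the Hardy class so the decomposition and boundary-identity arguments are legitimate — once that is in place, the rest is bookkeeping with M\"obius maps and the balancing relations \eqref{eq3}–\eqref{eq4}.
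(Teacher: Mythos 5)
Your outline shares two ingredients with the paper --- the decomposition $\varphi_{n+1}=B\,A$ with $B$ a Blaschke product and $A$ zero-free (Lemma \ref{Le2.2}), and the appeal to Pang's stationarity machinery \cite{Pa} --- but it omits the mechanism by which the explicit formulas are actually obtained, and the steps you substitute for it do not work as stated. The paper does not re-run the Pflug--Zwonek stationarity computation directly on $D_{n,1}$: after stripping the Blaschke factor it sets $g_{n+1}=-2i\ln A$ and observes that $(\varphi_1,\dots,\varphi_n,g_{n+1})$ is a Kobayashi geodesic of the Siegel-type domain $\Omega=\{\operatorname{Im} w>|z_1|^2+\cdots+|z_n|^2\}$, which a Cayley (M\"obius) transform carries to the unit ball $\mathbf{B}^{n+1}$; the formulas for $\varphi_j$ and for $A$, together with the relations \eqref{eq3}--\eqref{eq4}, are then simply read off from the known description of ball geodesics (Theorem \ref{Th1.1} with all exponents equal to $1$) and transported back. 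In your version this central step is replaced by the assertion that a Lagrange-type boundary identity ``pins down'' $\psi$ as a M\"obius function of $\lambda$; no derivation is indicated, your auxiliary function $\psi=-\tfrac12\|(\varphi_1,\dots,\varphi_n)\|^2-\log\varphi_{n+1}$ is not even holomorphic, and your claim that every $\varphi_j$ lies in $\mathcal H^\infty$ is unjustified a priori: $D_{n,1}$ is unbounded in the $z$-directions, and the paper needs (and uses) boundedness only of $\varphi_{n+1}$, which is automatic.

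The determination of $B$ is also flawed. Since $|B|=1$ a.e.\ on $\partial E$, the factor $B$ drops out of the boundary relation $\sum_j|\varphi_j|^2+\ln|\varphi_{n+1}|^2=0$, so no ``degree count using the boundary relation'' can bound $\deg B$; the paper obtains the bound from stationarity: after proving smooth extension and $\varphi(\partial E)\subseteq\partial D_{n,1}$ (Lemma \ref{Le2.4}), Lemma \ref{Le3.2} together with Gentili's Lemma \ref{Le3.3} gives $\widetilde{\varphi_{n+1}}\,\varphi_{n+1}=s(\lambda-\gamma)(1-\overline{\gamma}\lambda)$, hence $\varphi_{n+1}$ has at most one zero in $E$. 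Your method for locating $\gamma$ is likewise incorrect: the common denominator $1-\overline{\alpha_0}\lambda-g(\lambda)$ has no zero in $\overline{E}$ (this is precisely what the proof of Lemma \ref{Le2.4} establishes, and it is needed for the formulas to make sense), so $\varphi_{n+1}$ cannot vanish ``where $\exp\psi$ has a pole''. The root $\lambda_0$ of that denominator lies outside the closed disk, and the stated $\gamma$ is its reflection $1/\overline{\lambda_0}$, which the paper extracts not from that equation but from the stationarity boundary identity $s_{n+1}|1-\overline{\gamma_{n+1}}\lambda|^2|\varphi_j(\lambda)|^2=s_j|1-\overline{\gamma_j}\lambda|^2$ on $\partial E$. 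So while your skeleton (decompose, invoke stationarity, bound the Blaschke factor, pin down $\gamma$) parallels the paper, the two decisive steps --- the logarithmic reduction to the ball and the stationarity-based identification of $B$ and $\gamma$ --- are respectively missing and wrong in the proposal.
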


As an easy consequence of Theorem \ref{Th1.3} we get:
\begin{Corollary}\label{Le1.4}
Let $\varphi=(\varphi_1,\ldots,\varphi_{n+1}): E\rightarrow D_{n,1}$ be a $\mathcal{K}_{D_{n,1}}$-geodesic for $(\varphi(0),\varphi'(0))$ with
$\varphi'(0)\neq 0$. Then $\varphi$ extends smoothly onto the closure $\overline{E}$, and $\varphi(\partial E)\subseteq \partial D_{n,1}$.
\end{Corollary}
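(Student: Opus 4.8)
The plan is to read the statement off the explicit formulas of Theorem~\ref{Th1.3}; the only genuinely substantial point is that the common denominator occurring there is zero-free on $\overline{E}$, and for that extremality has to be invoked a second time. First I would reduce to the case where every component is $\not\equiv 0$. If $\varphi_{n+1}\equiv 0$ then $\varphi(E)\subseteq\mathbb{C}^n\times\{0\}$, which lies in $D_{n,1}$ and carries vanishing Kobayashi pseudometric, so $\mathcal{K}_{D_{n,1}}(\varphi(0),\varphi'(0))=0$, impossible for a $\mathcal{K}_{D_{n,1}}$-geodesic $\varphi$ with $\varphi'(0)\neq 0$; thus this case does not occur. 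If $\varphi_j\equiv 0$ for $j$ in some nonempty $J\subseteq\{1,\dots,n\}$, then $\varphi$ has image in $D':=\{z_j=0:\,j\in J\}\cap D_{n,1}$, a closed complex submanifold biholomorphic to $D_{n-|J|,1}$; since $D'\subseteq D_{n,1}$ gives $\mathcal{K}_{D'}\ge\mathcal{K}_{D_{n,1}}|_{D'}$ while $\varphi$ is itself a competing disc in $D'$, $\varphi$ is a $\mathcal{K}_{D_{n-|J|,1}}$-geodesic there, now with all components $\not\equiv 0$ (or $n-|J|=0$, in which case $D'=E$ and the claim is classical), and $\partial D_{n-|J|,1}\subseteq\partial D_{n,1}$. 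So from now on I assume $\varphi_i\not\equiv 0$ for all $i$, so the formulas of Theorem~\ref{Th1.3} hold.

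Next I would set $u(\lambda)=1-\overline{\alpha_0}\lambda$ (zero-free on $\overline{E}$ since $\alpha_0\in E$ by \eqref{eq3}), $v(\lambda)=\big(\frac{\lambda-\alpha_{n+1}}{1-\overline{\alpha_{n+1}}\lambda}\big)^{r_{n+1}}a_{n+1}(1-\overline{\alpha_{n+1}}\lambda)$, $d=u-v$ (the common denominator in Theorem~\ref{Th1.3}), and $N=u+v$; a short check shows $d$ and $N$ are in fact affine in $\lambda$. The key claim is that $d$ has no zero on $\overline{E}$. It has none on $E$ because $\varphi$ is holomorphic there, so suppose $d(\lambda_0)=0$ with $\lambda_0\in\partial E$. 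On $\partial E$, relations \eqref{eq4} give $|u|^2=\sum_{j=1}^{n+1}|a_j|^2|1-\overline{\alpha_j}\lambda|^2$, while $|v(\lambda_0)|^2=|a_{n+1}|^2|1-\overline{\alpha_{n+1}}\lambda_0|^2$ since a Blaschke factor is unimodular on $\partial E$; hence $|u(\lambda_0)|=|v(\lambda_0)|$ forces $\sum_{j=1}^n|a_j|^2|1-\overline{\alpha_j}\lambda_0|^2=0$, so (as $a_j\in\mathbb{C}_*$) $\alpha_j=\lambda_0$ and $r_j=0$ for $1\le j\le n$. Then $d$, being affine and vanishing at $\lambda_0$, is a nonzero multiple of $1-\overline{\lambda_0}\lambda$, which cancels the numerator factor $1-\overline{\lambda_0}\lambda$ of each $\varphi_j$ $(1\le j\le n)$; so every such $\varphi_j$ is a constant $c_j$, and $\varphi$ maps $E$ into the slice $\{z=c\}\cap D_{n,1}$, i.e. a disc $RE$ with $R=e^{-\|c\|^2/2}$, with $\varphi'(0)=(0,\dots,0,\varphi_{n+1}'(0))$ and $\varphi_{n+1}'(0)\neq 0$.

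Then I would derive a contradiction. Since $\varphi$ is a $\mathcal{K}_{D_{n,1}}$-geodesic, $\mathcal{K}_{D_{n,1}}(\varphi(0),\varphi'(0))=1$; the holomorphic inclusion $RE\hookrightarrow D_{n,1}$, $w\mapsto(c,w)$, is distance-decreasing, so $1\le\mathcal{K}_{RE}(\varphi_{n+1}(0),\varphi_{n+1}'(0))$, while $\varphi_{n+1}$ itself shows $\mathcal{K}_{RE}(\varphi_{n+1}(0),\varphi_{n+1}'(0))\le 1$; hence $\varphi_{n+1}$ is $\mathcal{K}_{RE}$-extremal, and by the Schwarz--Pick lemma $\varphi_{n+1}/R$ is an automorphism of $E$, so $\varphi_{n+1}$ extends holomorphically across $\partial E$. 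But in the formula $\varphi_{n+1}=B\exp\{-\frac12 N/d\}$ of Theorem~\ref{Th1.3} one has $d(\lambda_0)=0$ and $N(\lambda_0)=2u(\lambda_0)\neq 0$, so $-\frac12 N/d$ has a simple pole at $\lambda_0$ and $\varphi_{n+1}$ has an essential singularity there --- a contradiction. Therefore $d$ is zero-free on $\overline{E}$, and then every ingredient of the formulas of Theorem~\ref{Th1.3} --- the Blaschke factors $\big(\frac{\lambda-\alpha_j}{1-\overline{\alpha_j}\lambda}\big)^{r_j}$ (with $\alpha_j\in E$ whenever $r_j=1$), the affine numerators, the exponential of the rational function $-\frac12 N/d$, and $B$ (either $\equiv 1$ or a Blaschke factor with $|\gamma|\le 1$) --- is holomorphic on a neighbourhood of $\overline{E}$, so $\varphi$ extends smoothly, even holomorphically, onto $\overline{E}$.

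Finally, for the boundary inclusion I would just compute on $\partial E$: there $|B|=1$ and $\big|\frac{\lambda-\alpha_j}{1-\overline{\alpha_j}\lambda}\big|=1$, so $\sum_{j=1}^n|\varphi_j|^2=|d|^{-2}\sum_{j=1}^n|a_j|^2|1-\overline{\alpha_j}\lambda|^2$ and $|\varphi_{n+1}|^2=\exp\{-\operatorname{Re}(N/d)\}=\exp\{-|d|^{-2}(|u|^2-|v|^2)\}$; by \eqref{eq4} one has $|u|^2-|v|^2=\sum_{j=1}^n|a_j|^2|1-\overline{\alpha_j}\lambda|^2$ on $\partial E$, whence $|\varphi_{n+1}|^2=\exp\{-\sum_{j=1}^n|\varphi_j|^2\}$ on $\partial E$, i.e. $\varphi(\partial E)\subseteq\partial D_{n,1}$. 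The hard part will be exactly the zero-freeness of $d$ on $\partial E$: Theorem~\ref{Th1.3}, being only a necessary condition, does admit parameter choices for which $d$ vanishes on $\partial E$ (and then $\varphi$ fails even to extend continuously), so one cannot dispense with a second appeal to extremality --- here packaged through the slice and the Schwarz--Pick lemma --- to rule these out.
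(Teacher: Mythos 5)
Your proposal is correct, and it reaches the conclusion by a genuinely different route for the one nontrivial step, namely the zero-freeness on $\overline{E}$ of the common denominator $d=(1-\overline{\alpha_0}\lambda)-\big(\frac{\lambda-\alpha_{n+1}}{1-\overline{\alpha_{n+1}}\lambda}\big)^{r_{n+1}}a_{n+1}(1-\overline{\alpha_{n+1}}\lambda)$. The paper (in Lemma~\ref{Le2.4}, which is where this corollary is actually proved) first normalizes $\varphi(0)=(0,b)$ by an automorphism of $D_{n,1}$, which forces $\alpha_j=0$ for $1\leq j\leq n$; with that normalization, a purely algebraic manipulation of \eqref{eq3}--\eqref{eq4} (splitting into $r_{n+1}=0$ and $r_{n+1}=1$) shows a boundary zero of $d$ would force $a_j=0$, a contradiction. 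You work without normalizing, and instead use the identity $|1-\overline{\alpha_0}\lambda|^2=\sum_{j=1}^{n+1}|a_j|^2|1-\overline{\alpha_j}\lambda|^2$ on $\partial E$ (a correct consequence of \eqref{eq4}) to reduce a putative boundary zero $\lambda_0$ of $d$ to the degenerate configuration $\alpha_j=\lambda_0$, $r_j=0$ for all $j\leq n$, which you then kill by a second appeal to extremality (the slice $\{z=c\}$, Schwarz--Pick, and the incompatibility of $B\exp\{-N/(2d)\}$ with a degree-one rational map near the pole of $N/d$). Both arguments are sound; the paper's normalization is the cheaper way to exclude $\alpha_j\in\partial E$ for $j\le n$, while your version buys independence from the normalization and, notably, an explicit treatment of the case where some component is identically zero (or constant) --- a reduction the corollary needs but the paper glosses over, since Theorem~\ref{Th1.3} assumes $\varphi_i\not\equiv 0$. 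Your closing boundary computation ($\operatorname{Re}(N/d)=(|u|^2-|v|^2)/|d|^2$ together with the identity above) matches the paper's in substance. The only place I would ask you to add a line is the final contradiction: to turn ``essential singularity'' into a proof, note that $B\exp\{-N/(2d)\}$ and the rational extremal map are both holomorphic on a punctured disc about $\lambda_0$ and agree on its intersection with $E$, hence agree on the whole punctured disc by the identity theorem, which is impossible since one singularity is removable and the other essential.
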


Next we will give the explicit formula for the Kobayashi pseudometric on the domain $D_{1,1}$. Note that Kim-Ninh-Yamamori \cite{Kim-Ninh-Yamamori} completely describe the group of holomorphic automorphisms for the Fock-Bargmann-Hartogs domains $D_{n,m}$ as follows.
\begin{Theorem}[see \cite{Kim-Ninh-Yamamori}]
The automorphism group $\mathrm{Aut}(D_{n,m})$ is generated by all the following automorphisms of $D_{n,m}$:
\begin{equation*}
\varphi_{U}:\;(z,w)\rightarrow (zU,w),\;U\in\mathcal{U}(n);
\end{equation*}
\begin{equation*}
\varphi_{V}:\;(z,w)\rightarrow(z,wV),\;V\in \mathcal{U}(m);
\end{equation*}
\begin{equation*}
\varphi_{a}:\;(z,w)\rightarrow (z-a,e^{\langle z,a\rangle-\frac{1}{2}{\Vert a\Vert}^{2}}w),\;a\in\mathbb{C}^{n},
\end{equation*}
where $\mathcal{U}(n)$ denotes the set of the $n\times n$ unitary matrices.
\end{Theorem}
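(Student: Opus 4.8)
The plan is the familiar two‑step one for Hartogs‑type domains: first verify that the listed maps are automorphisms and record the features of the group $G$ they generate, and then show that \emph{every} $F\in\mathrm{Aut}(D_{n,m})$ lies in $G$, via a normalization followed by a rigidity argument at a fixed point. That $\varphi_U,\varphi_V$ preserve $D_{n,m}$ is clear from $\|zU\|=\|z\|$, $\|wV\|=\|w\|$; for $\varphi_a$ one substitutes $\|z-a\|^2=\|z\|^2-2\operatorname{Re}\langle z,a\rangle+\|a\|^2$ and $|e^{\langle z,a\rangle-\frac12\|a\|^2}|^2=e^{2\operatorname{Re}\langle z,a\rangle-\|a\|^2}$ and sees that the inequality $\|w\|^2<e^{-\|z\|^2}$ is carried to itself, with $\varphi_{-a}\circ\varphi_a=\mathrm{id}$ identifying the inverse. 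From $G:=\langle\varphi_U,\varphi_V,\varphi_a\rangle$ I will use only that it acts transitively on the core $Z:=\mathbb{C}^n\times\{0\}\subset D_{n,m}$ (move $(z_0,0)$ to the origin by $\varphi_{z_0}$) and that it contains every linear map $(z,w)\mapsto(zU,wV)$ with $U\in\mathcal{U}(n)$, $V\in\mathcal{U}(m)$.

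The structural input is that $Z$ is biholomorphically intrinsic: it is exactly the union of the images of all non‑constant holomorphic maps $\mathbb{C}\to D_{n,m}$. Indeed, for an entire $g=(g_z,g_w)\colon\mathbb{C}\to D_{n,m}$ one has $\|g_w\|\le 1$, so each component of $g_w$ is bounded, hence constant; if that constant vector were nonzero, then $e^{-\|g_z\|^2}>\|g_w\|^2>0$ would bound $\|g_z\|$ and make $g_z$ constant too, so $g$ would be constant. Hence any non‑constant entire curve in $D_{n,m}$ lies in $Z$, and conversely $Z$ contains affine lines; since $F$ and $F^{-1}$ carry non‑constant entire curves to non‑constant entire curves, $F(Z)=Z$. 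Writing $F(0,0)=(z_1,0)$ and replacing $F$ by $\varphi_{z_1}\circ F$, I may assume $F(0,0)=(0,0)$. It then suffices to show that such an $F$ is one of the maps $(z,w)\mapsto(zU,wV)$, since that map lies in $G$ and hence so does the original automorphism.

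For this I would use that $D_{n,m}$ carries a genuine $\mathrm{Aut}(D_{n,m})$‑invariant Kähler metric, its Bergman metric $\beta$ (Yamamori's explicit kernel \cite{Y} is positive, and $\beta$ is shown to be a bona fide, indeed complete, metric in \cite{KYZ}). Then $F$ is a holomorphic $\beta$‑isometry fixing $(0,0)$, hence is determined by its $1$‑jet at $(0,0)$ (an isometry of a connected Riemannian manifold is determined by its differential at any point, via commutation with the exponential map). Now $dF_{(0,0)}$ is a $\mathbb{C}$‑linear isometry of $\beta_{(0,0)}$; by the $\mathcal{U}(n)\times\mathcal{U}(m)$‑symmetry and Schur's lemma, $\beta_{(0,0)}=c_1\sum_j|dz_j|^2+c_2\sum_k|dw_k|^2$, and since $F(Z)=Z$ forces $dF_{(0,0)}$ to preserve $T_{(0,0)}Z=\mathbb{C}^n\times\{0\}$, writing $dF_{(0,0)}(u,v)=(uA+vB,\,vD)$ and imposing the isometry relation yields in turn $A\in\mathcal{U}(n)$ (set $v=0$), then $B=0$ (vanishing of the $u$‑linear term), then $D\in\mathcal{U}(m)$. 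Finally $(z,w)\mapsto(zA,wD)$ is a global automorphism in $G$ with the same $1$‑jet at $(0,0)$ as $F$, so $F(z,w)=(zA,wD)$, and $\mathrm{Aut}(D_{n,m})=G$.

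The hard part is this rigidity at the center: since $D_{n,m}$ is unbounded and non‑hyperbolic, H. Cartan's classical linearization for bounded circular domains does not apply verbatim, so the argument above leans on the nontrivial fact (from \cite{KYZ}) that the Bergman metric of $D_{n,m}$ is genuine despite the flat entire curves filling $Z$. If one prefers a self‑contained route, one may instead exploit that $D_{n,m}$ is complete circular (balanced: $|t|\le 1\Rightarrow(tz,tw)\in D_{n,m}$) and run a Cartan‑type uniqueness for $\rho_{-\theta}\circ F^{-1}\circ\rho_\theta\circ F$ with $\rho_\theta(z,w)=(e^{i\theta}z,e^{i\theta}w)$ — replacing the Cauchy‑estimate‑on‑iterates step by an argument using that the $\mathbb{C}^m$‑component of any holomorphic self‑map of $D_{n,m}$ is bounded by $1$, together with the $\mathrm{Aut}$‑invariance of $Z$ — to conclude $F$ is linear, and then read off $A\in\mathcal{U}(n)$, $B=0$, $D\in\mathcal{U}(m)$ from $F(Z)\subseteq Z$ and the boundary identity $\|wD\|^2=e^{-\|zA+wB\|^2}$ on $\{\|w\|^2=e^{-\|z\|^2}\}$ (comparing the identity with its image under $z\mapsto-z$ kills $B$, constancy of $\|zA\|^2$ on spheres forces $AA^*$ scalar, and comparing homogeneity degrees in $w$ forces that scalar to be $1$). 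Yet another option is to compute the complete holomorphic vector fields tangent to $\partial D_{n,m}$ directly, by solving $\operatorname{Re}(X\rho)\equiv 0$ on $\{\rho=0\}$ with $\rho=\log\|w\|^2+\|z\|^2$; these turn out to be exactly the infinitesimal generators of the families $\varphi_U,\varphi_V,\varphi_a$, and integrating them recovers $\mathrm{Aut}(D_{n,m})$.
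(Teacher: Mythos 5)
This statement is imported verbatim from Kim--Ninh--Yamamori \cite{Kim-Ninh-Yamamori}; the paper gives no proof of it, so there is no internal argument to compare yours against. Judged on its own, your main route is correct and is essentially the standard one for this domain. The verification that $\varphi_U,\varphi_V,\varphi_a$ preserve $D_{n,m}$ is right, and the Liouville-type characterization of $Z=\mathbb{C}^n\times\{0\}$ as the union of images of non-constant entire curves (boundedness of $g_w$ forces it constant; a nonzero constant would then bound $g_z$) is exactly the correct first step and does give $F(Z)=Z$, hence the normalization $F(0,0)=(0,0)$. The rigidity step is also sound: automorphisms are isometries of the Bergman metric, a Riemannian isometry of a connected manifold is determined by its $1$-jet at a point, the $\mathcal{U}(n)\times\mathcal{U}(m)$-invariance plus Schur forces $\beta_{(0,0)}=c_1\sum|dz_j|^2+c_2\sum|dw_k|^2$, and the block computation correctly yields $A\in\mathcal{U}(n)$, $B=0$, $D\in\mathcal{U}(m)$. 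The one genuinely load-bearing external input is the positive-definiteness (indeed completeness) of the Bergman metric of this unbounded, non-hyperbolic domain, which cannot be taken for granted but is supplied by \cite{Y} and \cite{KYZ}; you flag this honestly, and your alternative Cartan-type route for circular domains is a reasonable way to avoid it, though it is only sketched. In short: a correct reconstruction, in the same spirit as the cited source, of a result the paper itself merely quotes.
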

Therefore, in order to find the formulas for the Kobayashi pseudometric on $D_{1,1}$, it suffices to calculate the Kobayashi pseudometric for $((0,b),(X,Y))$ $(0\leq b<1)$ by the invariance of the Kobayashi pseudometric under the automorphism.

In the case $b = 0$ or $X = 0$, we can find the formulas for the Kobayashi pseudometric for $((0,b),(X,Y))$  $(0\leq b<1)$  as usual. But,
in the case $0<b<1$ and $X\neq 0$, we need much more calculations to find the formulas for the Kobayashi pseudometric for $((0,b),(X,Y))$  $(0\leq b<1)$. To make the calculations simpler, we define
\begin{equation}\label{def000}
v :=\frac{\vert Y\vert^2}{\vert X\vert^2}\;\;(X\not= 0).
\end{equation}

Then we have the formulas for the Kobayashi pseudometric as follows:
\begin{Theorem}\label{Th1.6}
The Kobayashi pseudometric of $D_{1,1}$ for $((0,b),(X,Y))$ can be expressed as follows:\\
$(i)$ When $b=0$, then, for $Y\neq 0$, we have
$$\mathcal{K}_{D_{1,1}}((0,0),(X,Y))=\mu(X,Y),$$
where $\mu(X,Y)$ is the Minkowski functional of $D_{1,1}$ which is uniquely determined by
$$\frac{\vert X\vert^2}{\mu^2}+\ln{\vert Y\vert^2}-2\ln \mu=0.$$
Additionally, we have
$$\mathcal{K}_{D_{1,1}}((0,0),(X,0))=0.$$
$(ii)$ When $0<b<1$, $X=0$, then we have
$$\mathcal{K}_{D_{1,1}}^2((0,b),(0,Y))=\frac{\vert Y\vert^2}{1-b^2}+\frac{\vert bY \vert^2}{(1-b^2)^2}.$$
$(iii)$ When $0<b<1$, $X\neq0$, then we have
\begin{equation}\label{eq22}
\mathcal{K}_{D_{1,1}}^2((0,b),(X,Y))
=\left\{
\begin{aligned}
&-\frac{1}{2\ln b}\bigg(\vert X\vert^2-\frac{\vert Y\vert^2}{2b^2\ln b}\bigg)     &     \mbox{for $v<4b^2$},\\
&\frac{\vert X\vert^2}{\alpha(1-b^2e^\alpha)}                                     &     \mbox{for $0<\alpha\leq\beta,\;v\geq 4b^2$},\\
&-\frac{1}{2\ln b}\bigg(\vert X\vert^2-\frac{\vert Y\vert^2}{2b^2\ln b}\bigg)     &     \mbox{for $\beta<\alpha<-2\ln b,\;v\geq4b^2$},
\end{aligned}
\right.
\end{equation}
where $\alpha,\;v$ satisfy
$$\frac{b^4\alpha^2e^\alpha+(1-b^2e^\alpha)^2e^{-\alpha}}{\alpha(1-b^2e^\alpha)}=v-2b^2$$
and $\beta$ is the only solution of the interval $(0,-2\ln b)$ for the equation
$$-\frac{1}{2\ln b}\beta(1-b^2e^\beta)e^\beta+\frac{1}{4b^2\ln^2 b}\big(b^2\beta e^\beta+(1-b^2e^\beta)\big)^2-e^\beta=0.$$
\end{Theorem}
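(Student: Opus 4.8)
My plan is to treat the three cases of the theorem separately; (i) and (ii) are soft, while (iii) is the substantial one and rests on Theorem~\ref{Th1.3}. For (i) the key observation is that $D_{1,1}$ is a balanced (complete circular) pseudoconvex domain: if $|w|^{2}<e^{-|z|^{2}}$ and $|\lambda|\le 1$, then $|\lambda w|^{2}\le|w|^{2}<e^{-|z|^{2}}\le e^{-|\lambda z|^{2}}$. For a balanced pseudoconvex domain the Kobayashi pseudometric at the center equals the Minkowski functional, so $\mathcal{K}_{D_{1,1}}((0,0),(X,Y))=\mu(X,Y)=\inf\{t>0:(X/t,Y/t)\in D_{1,1}\}$. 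For $Y\neq 0$ the condition $|Y/t|^{2}<e^{-|X/t|^{2}}$ becomes, after taking logarithms, $|X|^{2}/t^{2}+\ln|Y|^{2}-2\ln t<0$; since the left side strictly decreases from $+\infty$ to $-\infty$ in $t$, $\mu$ is the unique positive root of the displayed equation, and for $Y=0$ the inequality holds for all $t>0$, so $\mu(X,0)=0$. For (ii) I would use the holomorphic projection $\pi(z,w)=w$, which maps $D_{1,1}$ into $E$ (since $|w|^{2}<e^{-|z|^{2}}\le 1$), together with the holomorphic slice $\iota(w)=(0,w)$ mapping $E$ into $D_{1,1}$, which satisfy $\pi\circ\iota=\mathrm{id}_{E}$; the distance-decreasing property of both for $\mathcal{K}$ then forces $\mathcal{K}_{D_{1,1}}((0,b),(0,Y))=\mathcal{K}_{E}(b,Y)=|Y|/(1-b^{2})$, and $|Y|^{2}/(1-b^{2})^{2}=|Y|^{2}/(1-b^{2})+|bY|^{2}/(1-b^{2})^{2}$ is the stated form.

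For (iii) I would take a $\mathcal{K}_{D_{1,1}}$-geodesic $\varphi=(\varphi_{1},\varphi_{2})$ for $((0,b),(X,Y))$, which exists (for $b\neq 0$ one can show the competing minimizing disks stay in a compact subset of $D_{1,1}$, so a normal-families argument applies). Since $X\neq 0$ and $\varphi_{2}(0)=b\neq 0$ force $\varphi_{1}\not\equiv 0$ and $\varphi_{2}\not\equiv 0$, Theorem~\ref{Th1.3} with $n=1$ applies and fixes the form of $\varphi$; using $\varphi_{U}$ I would normalize $X>0$ (the final formula will depend on $Y$ only through $|Y|$). Imposing $\varphi_{1}(0)=0$ with $\varphi_{1}\not\equiv 0$ forces $r_{1}=1$ and $\alpha_{1}=0$, after which the side relations \eqref{eq4} collapse to $\alpha_{0}=|a_{2}|^{2}\alpha_{2}$ and $|a_{1}|^{2}=(1-|a_{2}|^{2})(1-|a_{2}|^{2}|\alpha_{2}|^{2})$, forcing $|a_{2}|<1$, while $\varphi_{2}(0)=b$ is one scalar equation relating $B(0)$, $a_{2}$, $\alpha_{2}$, $r_{2}$ to $b$. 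Because $\varphi$ is a geodesic, $\varphi'(0)\alpha=(X,Y)$ with $|\alpha|=\mathcal{K}_{D_{1,1}}((0,b),(X,Y))$ gives $\mathcal{K}_{D_{1,1}}((0,b),(X,Y))=|X|/|\varphi_{1}'(0)|$ and the direction constraint $v=|\varphi_{2}'(0)/\varphi_{1}'(0)|^{2}$; hence the problem becomes the finite-dimensional one of maximizing $|\varphi_{1}'(0)|$ over all admissible parameter choices with $\varphi(0)=(0,b)$ and the prescribed $v$.

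The remaining work is this optimization, and I expect it to be the main obstacle. After eliminating the phases (forced by $\varphi_{2}(0)=b>0$ and $X>0$) and removing $|a_{1}|$ via the relation above, I expect the admissible configurations to split into two families: an ``interior'' one ($r_{2}=1$, $|\alpha_{2}|<1$, $B$ the nontrivial Blaschke factor), for which, parametrizing by $\alpha\in(0,-2\ln b)$ with $1-b^{2}e^{\alpha}>0$ and $|\varphi_{1}'(0)|^{2}=\alpha(1-b^{2}e^{\alpha})$, one obtains $\mathcal{K}^{2}=|X|^{2}/(\alpha(1-b^{2}e^{\alpha}))$ together with the transcendental relation displayed in the theorem once $v$ is computed in terms of $\alpha$; and a ``boundary'' one ($r_{2}=0$ or $B\equiv 1$) giving the value $-\frac{1}{2\ln b}(|X|^{2}-\frac{|Y|^{2}}{2b^{2}\ln b})$. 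Substituting $u=b^{2}e^{\alpha}$ one checks that $v$, as a function of $\alpha$, equals $2b^{2}+b^{2}(T+T^{-1})$ with $T=\alpha b^{2}e^{\alpha}/(1-b^{2}e^{\alpha})$ strictly increasing from $0$ to $\infty$; hence the interior family realizes only $v\ge 4b^{2}$, and for $v>4b^{2}$ via two values of $\alpha$, which is the source of the threshold $v<4b^{2}$ versus $v\ge 4b^{2}$. Finally one compares the two candidate values of $\mathcal{K}^{2}$; they coincide exactly at $\alpha=\beta$, where $\beta$ is characterized by the last displayed equation (obtained by equating the two expressions and using the $\alpha$--$v$ relation), so the minimum is the interior formula for $0<\alpha\le\beta$ and the boundary formula for $\beta<\alpha<-2\ln b$. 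The delicate point is exactly this bookkeeping: deciding which admissible configuration is the true minimizer for each $(b,v)$, establishing the monotonicity of $T$ that yields the clean threshold $4b^{2}$, locating $\beta$, and checking that $\mathcal{K}^{2}$ is continuous across the regime boundaries.
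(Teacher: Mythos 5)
Your proposal follows essentially the same route as the paper: part (i) via the balanced-domain/Minkowski-functional fact, part (ii) via the slice-and-projection sandwich, and part (iii) by invoking Theorem \ref{Th1.3} to reduce to a finite family of explicit candidate disks, computing the two resulting values of $\tau^2$ (the "boundary" value and the "interior" value with its transcendental $\alpha$--$v$ relation and the $v\geq 4b^2$ threshold), and comparing them through the auxiliary function whose unique zero is $\beta$ --- exactly the paper's $g(t)$ analysis. The one detail to correct when executing the bookkeeping is that the split into the two candidate values is governed solely by whether the Blaschke factor $B$ is trivial, not by $r_2$ (the paper's form \eqref{eq17} has $r_2=0$ but nontrivial $B$ and yields the "interior" value \eqref{eq21}); this relabeling does not change the final comparison.
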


The classical Schwarz lemma gives information about the behaviour of a holomorphic function on the disc at the origin, subject only to the relatively mild
hypotheses that the function map the disc to the disc and the origin to the origin. There are far-reaching generalizations of the classical Schwarz lemma, due to Ahlfors \cite{A} and others (e.g., see Kim-Lee \cite{KL} and references
therein). It is natural to consider various boundary versions of Schwarz lemma. There is a classical Schwarz lemma at the boundary as follows:

\begin{Theorem} [see \cite{Garnett}]\label{Garnett} Let $f : E \rightarrow E$ be a holomorphic function on the open unit disk $E$. If f is holomorphic at $z = 1$ with $f (1) = 1$, then
$f'(1)\geq \frac{\vert1-\overline{f(0)}\vert^2}{1-\vert f(0)\vert^2}>0.$ Moreover, the inequality is sharp.
\end{Theorem}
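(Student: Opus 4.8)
The plan is to extract the bound from the Schwarz--Pick lemma evaluated at the two points $0$ and $z$, and then to let $z$ tend radially to the boundary point $1$, using the holomorphicity of $f$ at $1$ to evaluate both sides; the only genuinely non‑formal ingredient will be that the boundary derivative $f'(1)$ is automatically a positive real number.

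For the first step, since $f:E\to E$ is holomorphic, the Schwarz--Pick inequality gives
$$\left|\frac{f(z)-f(0)}{1-\overline{f(0)}\,f(z)}\right|\le |z|,\qquad z\in E .$$
Combining this (after squaring) with the elementary identity
$$1-\left|\frac{a-b}{1-\bar b\,a}\right|^2=\frac{(1-|a|^2)(1-|b|^2)}{|1-\bar b\,a|^2},\qquad a,b\in E,$$
applied with $a=f(z)$ and $b=f(0)$, one obtains the key inequality
$$\frac{1-|f(z)|^2}{1-|z|^2}\ \ge\ \frac{\,|1-\overline{f(0)}\,f(z)|^2\,}{1-|f(0)|^2},\qquad z\in E .$$

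Next I would run two limits. To see that $f'(1)$ is real and nonnegative, write $z=1-\zeta$ with $\zeta=\varepsilon e^{i\theta}$, $|\theta|<\pi/2$ fixed, $\varepsilon\downarrow 0$ (so that $z\in E$ for small $\varepsilon$); expanding $f(1-\zeta)=1-f'(1)\zeta+O(|\zeta|^2)$ gives $1-|f(1-\zeta)|^2=2\,\mathrm{Re}\big(f'(1)\zeta\big)+O(|\zeta|^2)\ge 0$, so dividing by $\varepsilon$ and letting $\varepsilon\downarrow0$ forces $\mathrm{Re}\big(f'(1)e^{i\theta}\big)\ge 0$ for every $|\theta|<\pi/2$, which is possible only when $f'(1)\in[0,\infty)$. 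Then, taking $z=r\in(0,1)$ and $r\uparrow1$ in the key inequality, the same expansion gives $1-|f(r)|^2=2f'(1)(1-r)+o(1-r)$ and $1-r^2=2(1-r)+o(1-r)$, so the left side tends to $f'(1)$, while the right side tends to $|1-\overline{f(0)}|^2/(1-|f(0)|^2)$ by continuity; this yields exactly $f'(1)\ge |1-\overline{f(0)}|^2/(1-|f(0)|^2)>0$. Finally, for sharpness I would test the automorphisms $f_a(z)=\dfrac{z-a}{1-az}$ with $a\in(-1,1)$: here $f_a(1)=1$, $f_a(0)=-a$, and $f_a'(1)=\dfrac{1-a^2}{(1-a)^2}=\dfrac{1+a}{1-a}=\dfrac{|1-\overline{f_a(0)}|^2}{1-|f_a(0)|^2}$, so equality is attained for every such $a$ (a general value of $f(0)$ being reached after composing with a rotation).

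The one step deserving care is the claim that $f'(1)$ is real and positive --- without it the stated inequality would not even be meaningful --- but the short argument above, which uses only $|f|\le 1$, $f(1)=1$ and holomorphy of $f$ at $1$, settles it; this is the familiar phenomenon underlying the Julia--Wolff--Carath\'eodory theorem. Everything else reduces to the Schwarz--Pick inequality and a one‑variable Taylor expansion at the boundary.
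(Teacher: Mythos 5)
Your proof is correct. Note that the paper itself gives no proof of Theorem \ref{Garnett}: it is quoted as a classical result with a citation to \cite{Garnett}, so there is no in-paper argument to compare against. Your route --- the Schwarz--Pick inequality recast as $\frac{1-|f(z)|^2}{1-|z|^2}\ge \frac{|1-\overline{f(0)}f(z)|^2}{1-|f(0)|^2}$, followed by the radial limit $z=r\uparrow 1$ using the Taylor expansion of $f$ at $1$ --- is the standard Julia--Wolff--Carath\'eodory-type argument, and your preliminary sector argument showing $f'(1)\in[0,\infty)$ (needed both for the statement to be meaningful and so that the limit of the left-hand side is $f'(1)$ rather than $\mathrm{Re}\,f'(1)$) is sound; the hypothesis that $f$ is holomorphic at $1$ legitimizes all the expansions. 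The only imprecise spot is the parenthetical remark on sharpness: composing $f_a$ with a rotation destroys the normalization $f(1)=1$, so it does not directly realize an arbitrary complex value of $f(0)$; to get equality with a general $f(0)$ one should instead use the automorphisms of $E$ fixing $1$, namely $f(z)=\frac{1-\bar a}{1-a}\cdot\frac{z-a}{1-\bar a z}$ with $a\in E$, for which $f'(1)=\frac{1-|a|^2}{|1-a|^2}$ equals the bound. Since sharpness as stated only requires exhibiting equality cases, your real one-parameter family (or even $f(z)=z$) already suffices, so this is a cosmetic point, not a gap.
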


Chelst \cite{C} and Osserman \cite{O} studied the Schwarz lemma at the boundary of the
unit disk also. Burns-Krantz \cite{BK}, Huang  \cite{H} and  Krantz \cite{Krantz} explored versions of the Schwarz lemma at the boundary point of some domains in $\mathbb{C}^n$. In 2015, Liu-Wang-Tang \cite{Tang-Liu2} gave a new type of Schwarz lemma at the boundary of the unit ball in $\mathbb{C}^n$ as follows.

\begin{Theorem}[see Liu-Wang-Tang \cite{Tang-Liu2}]\label{Tang-Liu2}
Let ${\bf{B}}^n:= \{z=(z_1,\cdots,z_n) \in \mathbb{C}^n: |z_1|^2 +\cdots + |z_n|^2 < 1\}$ be the open unit ball. Let $ f : \bf{B}^n \rightarrow  \bf{B}^n$ be a holomorphic mapping, and
the complex Jacobian matrix of $f$ at $a\in \bf{B}^n $ is denoted by $J_f(a)=(\frac{\partial f_i}{\partial z_j}(a))_{n\times n}$.
If $f$ is holomorphic
at $z_0\in \partial \bf{B}^n$ and $f(z_0) = z_0$, then the following statements hold.\\
$(i)$ The normal vector $z_0^T$ to $\partial \bf{B}^n$ at $z_0$ is an eigenvector of $\overline{J_f(z_0)}^T$ and the corresponding eigenvalue $\lambda$ is a real number (that is, $ \overline{J_f(z_0)}^T z_0^T = \lambda z_0^T$) with
\begin{equation}
\lambda\geq\frac {\vert1-\overline{f(0)}z_0^T\vert^2} {1-\vert f(0)\vert^2}>0,
\end{equation}
Thus the real number $\lambda$ is also an eigenvalue of ${J_f(z_0)}$. \\
$(ii)$ For any other eigenvalues $\mu_j$ of $J_f(z_0)$, there exist $\alpha^j\in T_p^{1,0}(\partial \bf{B}^n)\setminus\{0\}$ such that $\alpha^{j}J_f(z_0)^T=\mu_j\alpha^{j}$ ($j=2,\cdots, n$). Moreover, for all
$j=2,\cdots, n$,
\begin{equation}
\vert \mu_j\vert\leq \sqrt\lambda.
\end{equation}
Moreover, the inequalities are sharp.
\end{Theorem}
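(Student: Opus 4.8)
\quad The plan is to isolate the normal behaviour of $f$ at $z_0$ — which is essentially one--dimensional — from its complex--tangential behaviour, and to reduce both to the classical one--variable boundary Schwarz lemma (Theorem \ref{Garnett}) together with the Schwarz--Pick inequality for the ball $\mathbf B^n$ (valid by the usual reduction to the disc, and also subsumed in the equality of the Kobayashi and Carath\'eodory pseudometrics on the convex domain $\mathbf B^n$, cf.\ \cite{Lempert}). Since the unitary group acts transitively on $\partial\mathbf B^n$ by automorphisms of $\mathbf B^n$, and replacing $f$ by $z\mapsto f(zU^{-1})U$ for a unitary matrix $U$ with $z_0U=\mathbf e_1:=(1,0,\dots,0)$ conjugates $J_f(z_0)$ by a unitary matrix — preserving eigenvalues, the Hermitian structure, and the tangent space $T^{1,0}_{z_0}\partial\mathbf B^n$ — I may assume $z_0=\mathbf e_1$.

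\smallskip
\emph{Normal direction.} The scalar slice $g(\zeta):=\langle f(\zeta z_0),z_0\rangle$ is, by Cauchy--Schwarz, a holomorphic self--map of $E$; it is holomorphic at $\zeta=1$ with $g(1)=1$, and $g'(1)=\overline{z_0}\,J_f(z_0)\,z_0^{T}$. Theorem \ref{Garnett} applied to $g$ forces $\lambda:=g'(1)$ to be a positive real number. For the sharp lower bound, apply the Schwarz--Pick inequality for $\mathbf B^n$ with base point $a=0$ at $z=tz_0$ and let $t\to1^-$: since $f$ is differentiable at $z_0$ with $f(z_0)=z_0$ and $\lim_{t\to1^-}\frac{1-\|f(tz_0)\|^2}{1-t^2}=\lambda$, one obtains $\lambda\ge\frac{|1-\langle z_0,f(0)\rangle|^2}{1-\|f(0)\|^2}=\frac{|1-\overline{f(0)}z_0^{T}|^2}{1-\|f(0)\|^2}>0$. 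For the eigenvector relation, observe that $f$ maps a neighbourhood of $z_0$ in $\overline{\mathbf B^n}$ into $\overline{\mathbf B^n}$, so $P(z):=1-|\langle f(z),z_0\rangle|^2$ is $\ge0$ near $z_0$ with $P(z_0)=0$; hence $z_0$ is a local minimum of $P|_{\partial\mathbf B^n}$. Writing $F:=\langle f(\cdot),z_0\rangle$ (holomorphic, $F(z_0)=1$), so that $dP(z_0)=-2\,\mathrm{Re}\,\partial F(z_0)$, the condition $dP(z_0)|_{T_{z_0}\partial\mathbf B^n}=0$ — restricted to the complex tangent space $T^{1,0}_{z_0}\partial\mathbf B^n=\{v:\langle v,z_0\rangle=0\}$ and using that this space is invariant under multiplication by $i$ — forces $\partial F(z_0)$ to vanish on $T^{1,0}_{z_0}\partial\mathbf B^n$, hence to be a scalar multiple of $\langle\cdot,z_0\rangle$; evaluating on $z_0$ identifies the scalar as $g'(1)=\lambda$, that is $\overline{z_0}J_f(z_0)=\lambda\overline{z_0}$, equivalently $\overline{J_f(z_0)}^{T}z_0^{T}=\lambda z_0^{T}$ with $\lambda$ real, so $\lambda$ is also an eigenvalue of $J_f(z_0)$. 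Consequently $z_0^{\perp}=T^{1,0}_{z_0}\partial\mathbf B^n$ is invariant under $J_f(z_0)$, the remaining eigenvalues $\mu_2,\dots,\mu_n$ are those of $B:=J_f(z_0)|_{z_0^{\perp}}$, and they come with eigenvectors $\alpha^j\in T^{1,0}_{z_0}\partial\mathbf B^n\setminus\{0\}$; this settles (i) and the eigen--part of (ii).

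\smallskip
\emph{Tangential bound and sharpness.} Letting the base point $a$ tend radially to $z_0$ in the Schwarz--Pick inequality and inserting the value $\lambda$ found above gives Julia's inequality
$$\frac{|1-\langle f(z),z_0\rangle|^2}{1-\|f(z)\|^2}\ \le\ \lambda\,\frac{|1-\langle z,z_0\rangle|^2}{1-\|z\|^2}\qquad(z\in\mathbf B^n).$$
Fix a unit eigenvector $e'\in z_0^{\perp}$ of $B$ for an eigenvalue $\mu=\mu_j$ and plug in the points $z(s):=(1-s)z_0+\sqrt{s}\,\tau\,e'$ with $s>0$ small and $\tau\in\mathbb C$, $|\tau|^2<2-s$; the scaling $z'\sim\sqrt{s}$ is the one compatible with the defining equation of $\partial\mathbf B^n$ near $z_0$. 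Expanding both sides to first order in $s$, dividing by $s$ and letting $s\to0$ produces an inequality of the form $(2-|\tau|^2)|w|^2\le\lambda\big(2\,\mathrm{Re}\,w-|\tau|^2|\mu|^2\big)$, where $w=\lambda-\tfrac12\tau^2Q$ and $Q=\sum_{j,k\ge2}\partial^2_{jk}f_1(z_0)\,e'_je'_k$; then letting $\tau\to0$ with $\arg(\tau^2)$ chosen to minimise the right--hand side yields $|\mu|^2\le\lambda-|Q|\le\lambda$, i.e.\ $|\mu_j|\le\sqrt\lambda$. Sharpness of both inequalities is witnessed by automorphisms of $\mathbf B^n$ fixing $z_0$: the identity already gives $\lambda=1$ and $|\mu_j|=1=\sqrt\lambda$, while a suitable M\"obius automorphism realises equality in (i) for prescribed $f(0)$.

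\smallskip
The main obstacle is the tangential estimate. One must use precisely the square--root--scaled family $z(s)$ — the exponent $\tfrac12$ there encodes the parabolic contact of $\partial\mathbf B^n$ with its complex tangent hyperplane at $z_0$ and is exactly what yields the exponent $\tfrac12$ in $|\mu_j|\le\sqrt\lambda$ — and one must track the second--order term $Q$ carefully so that it does not spoil the bound. The normal direction is genuinely one--dimensional once the Schwarz--Pick inequality for $\mathbf B^n$ is in hand, which is the other input to be pinned down for a self--contained argument.
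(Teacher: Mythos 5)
This statement is quoted from Liu--Wang--Tang \cite{Tang-Liu2}; the paper gives no proof of it, so there is no internal proof to compare against and your argument must stand on its own --- and it essentially does. Your part (i) --- the slice $g(\zeta)=\langle f(\zeta z_0),z_0\rangle$ handled by Theorem \ref{Garnett}, the lower bound for $\lambda$ recovered from the Schwarz--Pick inequality with base point $0$ along the radius (a necessary detour: applying Theorem \ref{Garnett} to $g$ alone gives only the weaker denominator $1-|g(0)|^2$, and you correctly avoid this), and the Hopf-type argument that $\mathrm{Re}\,\partial F(z_0)$ vanishes on the $i$-invariant space $T^{1,0}_{z_0}\partial\mathbf{B}^n$, forcing $\overline{z_0}J_f(z_0)=\lambda\overline{z_0}$ and the invariance of $z_0^{\perp}$ --- is the same normal/tangential splitting the paper itself deploys for its Theorem \ref{Th1.8}, where $g(\zeta)=h_1(\zeta p)$ and the curve computation $\frac{d}{dt}\rho_{D_{n,m}}(F(\gamma(t)))\vert_{t=0}=0$ play the identical roles. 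In part (ii) you diverge from the paper's template: the paper bounds the tangential derivative via the decreasing property of the infinitesimal Kobayashi pseudometric at interior points $bp\to p$ together with an explicit formula for that pseudometric, whereas you use the integrated Julia inequality along the parabolically scaled curve $z(s)=(1-s)z_0+\sqrt{s}\,\tau e'$ and the iterated limit $s\to 0$, then $\tau\to 0$ with optimized $\arg(\tau^2)$. I checked the expansion: $1-\|z(s)\|^2=s(2-s-|\tau|^2)$, $|1-\langle z(s),z_0\rangle|^2=s^2$, $1-\langle f(z(s)),z_0\rangle=s\big(\lambda-\tfrac12\tau^2Q\big)+O(s^{3/2})$ and $1-\|f(z(s))\|^2=s\big(2\mathrm{Re}\,w-|\tau|^2|\mu|^2\big)+O(s^{3/2})$, so your limiting inequality and the conclusion $|\mu|^2\le\lambda-|Q|\le\lambda$ are correct. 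The two routes are equivalent in substance (Julia's inequality is the integrated form of the metric-decreasing property on the ball); yours needs only classical one-variable inputs plus the M\"obius identities for $\mathbf{B}^n$, while the paper's metric route is the one that transfers to $D_{n,m}$, where no Julia inequality is available off the shelf. The only soft spot is sharpness in (ii): the identity map gives equality only with $\lambda=1$; to exhibit it for every admissible $\lambda$ you should name the automorphisms $\big(\tfrac{z_1+r}{1+rz_1},\tfrac{\sqrt{1-r^2}\,z'}{1+rz_1}\big)$, which fix $e_1$ and realize $\mu_j=\sqrt{\lambda}$ exactly.
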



When $n = 1$ and $z_0 = 1$, Theorem \ref{Tang-Liu2} reduces to Theorem \ref{Garnett}, which extends the boundary Schwarz lemma to high dimensions. By using the boundary behavior of the Carath\'{e}dory and Kobayashi metrics on bounded strongly pseudoconvex domains with smooth boundary (see Graham \cite{Graham}), recently, Liu-Tang \cite{Tang-Liu} established the new type of boundary Schwarz lemma for holomorphic self-mappings of bounded strongly pseudoconvex domain in $\mathbb{C}^n$. The similar result can be found in Bracci-Zaitsev \cite{BZ} by using quite different method. Following the idea in Liu-Tang \cite{Tang-Liu} , this paper will use our formulas for the Kobayashi pseudometric on $D_{1,1}$ to establish the boundary Schwarz lemma for holomorphic mappings between the nonequidimensional Fock-Bargmann-Hartogs domains  $D_{1,1}$ and $D_{n,m}$.

Actually, by the homogeneity of the boundary of $D_{n,m}$  under the automorphism (see Kim-Ninh-Yamamori \cite{Kim-Ninh-Yamamori}), we will fix the boundary points $p=(0,1)\in \partial D_{1,1}$ and $q=(0,\ldots,0,1,0,\ldots,0)\in \partial D_{n,m}$, i.e., $z=(0,\ldots,0)\in \mathbb{C}^{n}$ and $w=(1,0,\ldots,0)\in \mathbb{C}^{m}$. We give the boundary Schwarz lemma for holomorphic mappings between the nonequidimensional Fock-Bargmann-Hartogs domains as follows.

\begin{Theorem}\label{Th1.8}
Let $F=(f,h):D_{1,1}\rightarrow D_{n,m}$ be a holomorphic mapping and holomorphic at $p$ with $F(p)=q$. Then we have the result as follows.  \\
$(i)$ There exist $\lambda\in\mathbb{R}$ such that
\begin{equation*}
\overline{J_F(p)}^{T}q^{T}=\lambda p^T
\end{equation*}
with $\lambda\geq\vert1-\overline{h_1(0)}\vert^2/(1-\vert h_1(0)\vert^2)>0$. Notice that $p^{T}$ and $q^{T}$ are the normal vectors to the boundary of $D_{1,1}$ at $p$ and $D_{n,m}$ at $q$ respectively.\\
$(ii)$ ${J_F(p)}$ can be regarded as a linear operator from $T_p^{1,0}(\partial D_{1,1})$ to $T_{F(p)}^{1,0}(\partial D_{n,m})$. Moreover, we have
\begin{equation*}
\Vert J_F(p)\Vert_{op}\leq\sqrt\lambda,
\end{equation*}
where $\Vert\cdot\Vert_{op}$ means the usual operator norm.
\end{Theorem}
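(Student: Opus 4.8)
The plan is to reduce everything to the single scalar function $h_1$ (the $(n+1)$-st component of $F$), and then to play the distance-decreasing property of the Kobayashi pseudometric against the explicit formula of Theorem \ref{Th1.6}. First I would record the standard defining functions $\rho_1(z,w)=|w|^2-e^{-|z|^2}$ of $D_{1,1}$ and $\rho_2(z,w)=\|w\|^2-e^{-\|z\|^2}$ of $D_{n,m}$ and compute $\partial\rho_1(p)=(0,1)=p$ and $\partial\rho_2(q)=(0,\ldots,0,1,0,\ldots,0)=q$ (the $1$ in slot $n+1$), both of unit norm, together with the complex Hessians of $\rho_1$ at $p$ and of $\rho_2$ at $q$, which are the identity matrices $I_2$ and $I_{n+m}$. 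In particular $T_p^{1,0}(\partial D_{1,1})=\mathbb{C}\cdot(1,0)$, and a vector $\eta\in\mathbb{C}^{n+m}$ lies in $T_q^{1,0}(\partial D_{n,m})$ exactly when its $(n+1)$-st coordinate vanishes. Writing $F=(f_1,\ldots,f_n,h_1,\ldots,h_m)$ in coordinates $(z,w)$ on $D_{1,1}\subset\mathbb{C}^2$, the identity $\overline{J_F(p)}^{T}q^{T}=\lambda p^{T}$ is then equivalent to the two scalar relations $\partial h_1/\partial z(p)=0$ and $\partial h_1/\partial w(p)=\overline{\lambda}$. Hence part $(i)$ splits into: (a) $\partial h_1/\partial z(p)=0$; (b) $\lambda_0:=\partial h_1/\partial w(p)$ is a positive real number; (c) $\lambda_0\ge|1-\overline{h_1(0,0)}|^2/(1-|h_1(0,0)|^2)$; after which part $(ii)$ reduces to bounding $\|\partial F/\partial z(p)\|$.

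For (b) and (c) I would observe that $|w|<1$ everywhere on $D_{1,1}$, so $h_1\colon D_{1,1}\to E$ is holomorphic, and restrict $F$ to the vertical disc $\psi\colon E\to D_{1,1}$, $\psi(\lambda):=(0,\lambda)$, which satisfies $\psi(1)=p$ and $\psi'\equiv(0,1)$. Then $g:=h_1\circ\psi\colon E\to E$ is holomorphic at $1$ with $g(1)=h_1(p)=1$, $g(0)=h_1(0,0)$ and $g'(1)=\partial h_1/\partial w(p)$, so the classical boundary Schwarz lemma (Theorem \ref{Garnett}) applied to $g$ gives $g'(1)\in\mathbb{R}$ together with $g'(1)\ge|1-\overline{g(0)}|^2/(1-|g(0)|^2)>0$, which is (b) and (c) with $\lambda_0=g'(1)$. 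For (a) I would invoke Theorem \ref{Th1.6}: by the distance-decreasing property applied to $h_1$, with $x=(0,b)$, $0<b<1$, and the complex-tangential direction $\xi=(1,0)$ (so $X=1$, $Y=0$, $v=0<4b^2$ in the notation of Theorem \ref{Th1.6}),
\[
\frac{|\partial h_1/\partial z(0,b)|}{1-|h_1(0,b)|^2}=\mathcal{K}_E\big(h_1(0,b),dh_1(0,b)(\xi)\big)\le\mathcal{K}_{D_{1,1}}\big((0,b),(1,0)\big)=\Big(-\frac{1}{2\ln b}\Big)^{1/2}.
\]
As $b\uparrow1$ the right-hand side is $\big(2(1-b)\big)^{-1/2}(1+o(1))$, while $h_1(0,b)=g(b)=1-\lambda_0(1-b)+O((1-b)^2)$ forces $1-|h_1(0,b)|^2=2\lambda_0(1-b)(1+o(1))$; hence the left-hand side equals $|\partial h_1/\partial z(p)|\,(2\lambda_0)^{-1}(1-b)^{-1}(1+o(1))$, and comparing orders of magnitude as $b\uparrow1$ forces $\partial h_1/\partial z(p)=0$. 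This completes $(i)$ with $\lambda=\lambda_0$.

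For $(ii)$, part $(i)$ shows that $dF_p(1,0)=\partial F/\partial z(p)$ has vanishing $(n+1)$-st coordinate, hence lies in $T_q^{1,0}(\partial D_{n,m})$; since $T_p^{1,0}(\partial D_{1,1})=\mathbb{C}(1,0)$, the induced operator $J_F(p)\colon T_p^{1,0}(\partial D_{1,1})\to T_q^{1,0}(\partial D_{n,m})$ is well defined and $\|J_F(p)\|_{op}=\|\partial F/\partial z(p)\|$. I would then apply the distance-decreasing property to $F$ with $x=(0,b)$, $\xi=(1,0)$ to get $\mathcal{K}_{D_{n,m}}\big(F(0,b),\partial F/\partial z(0,b)\big)\le\big(2(1-b)\big)^{-1/2}(1+o(1))$, and note that $-\rho_2(F(0,b))=e^{-\|f(0,b)\|^2}-\|h(0,b)\|^2\le1-|h_1(0,b)|^2=2\lambda(1-b)(1+o(1))$, so $F(0,b)\to q$ at boundary-distance of order $(1-b)$ with $\partial F/\partial z(0,b)\to\partial F/\partial z(p)\in T_q^{1,0}(\partial D_{n,m})$. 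Since the complex Hessian of $\rho_2$ at $q$ is $I_{n+m}$, the Graham-type lower estimate for the Kobayashi metric at the strongly pseudoconvex point $q$ bounds the left-hand side below by $\big(\|\partial F/\partial z(p)\|^2/(-\rho_2(F(0,b)))\big)^{1/2}(1-o(1))\ge\big(\|\partial F/\partial z(p)\|^2/(2\lambda(1-b))\big)^{1/2}(1-o(1))$; comparing and letting $b\uparrow1$ gives $\|\partial F/\partial z(p)\|^2\le\lambda$, that is, $\|J_F(p)\|_{op}\le\sqrt{\lambda}$.

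The hard part will be the last ingredient: the sharp lower bound for $\mathcal{K}_{D_{n,m}}$ in complex-tangential directions as the base point tends to $q$. For bounded strongly pseudoconvex domains this is Graham's estimate \cite{Graham}, but for the unbounded, non-hyperbolic domain $D_{n,m}$ the Kobayashi metric need not localize near the boundary, so this tangential lower bound must be obtained separately — it is exactly the type of boundary information provided by Kim--Yamamori--Zhang \cite{KYZ}; all the other ingredients (the explicit formula of Theorem \ref{Th1.6}, the classical boundary Schwarz lemma, and contractivity of the Kobayashi pseudometric under holomorphic maps) are already at hand.
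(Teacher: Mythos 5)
Your part $(i)$ is essentially correct. The splitting into the three scalar claims, and the treatment of (b),(c) via $g(\zeta):=h_1(\zeta p)$ and Theorem \ref{Garnett}, coincide with the paper. For (a) ($\partial h_1/\partial z(p)=0$) you argue by comparing orders of magnitude in $(1-b)$ using the $v<4b^2$ branch of Theorem \ref{Th1.6}; this works, but the paper gets the same fact more cheaply and without any asymptotics, by differentiating $\rho_{D_{n,m}}\circ F\circ\gamma$ along boundary curves $\gamma$ through $p$ and using that $e^{i\theta}\alpha$ is again complex-tangent, which gives $J_F(p)\bigl(T_p^{1,0}(\partial D_{1,1})\bigr)^T\subseteq\bigl(T_q^{1,0}(\partial D_{n,m})\bigr)^T$ directly. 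Either route is acceptable.

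Part $(ii)$, however, has a genuine gap, and it is exactly the step you flag at the end: the sharp complex-tangential lower bound for $\mathcal{K}_{D_{n,m}}$ at base points $F(bp)\to q$. You cannot defer this to a ``Graham-type estimate'': Graham's theorem is for bounded strongly pseudoconvex domains and $D_{n,m}$ is unbounded and non-hyperbolic, and the metric comparisons in Kim--Yamamori--Zhang are two-sided equivalences, not sharp asymptotics with constant $1$; any loss of constant there destroys the conclusion $\Vert J_F(p)\Vert_{op}^2\le\lambda$. The paper's way around this --- and the reason Theorem \ref{Th1.6} is proved at all --- is to obtain the target-side lower bound from the explicit $D_{1,1}$ formula itself. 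Concretely: $D_{n,m}\subset\Omega\subset D_{n+m-1,1}$ with $\Omega=\{\Vert z\Vert^2+\sum_{j\ge2}\vert w_j\vert^2+\ln\vert w_1\vert^2<0\}$, so $\mathcal{K}_{D_{n,m}}\ge\mathcal{K}_{D_{n+m-1,1}}$; the automorphism $(z,w)\mapsto(z-f(bp),e^{\langle z,f(bp)\rangle-\frac12\Vert f(bp)\Vert^2}w)$ of $D_{n+m-1,1}$ normalizes the base point to $(0,X(b))$; a unitary rotation followed by the holomorphic projection onto a $D_{1,1}$ slice preserves the full tangential norm $\vert t\vert^2=\vert\alpha\vert^2(\Vert\partial F/\partial z(bp)\Vert^2-\vert\partial h_1/\partial z(bp)\vert^2)$; and finally the first branch of \eqref{eq22} applies because $v\to0$ as $b\to1^-$ (since $f(bp)\to0$ and $\partial h_1/\partial z(bp)\to0$), yielding a lower bound whose $b\to1^-$ limit gives exactly $\Vert\partial F/\partial z(p)\Vert^2\le\lambda$. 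Without this reduction (or an equivalently sharp substitute that you would have to prove), your argument for $(ii)$ does not close.
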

\begin{Remark}
When $n=1$ and $m=1$, Theorem \ref{Th1.8} can be obtained by Propostion 1.1 in  Bracci-Zaitsev $\cite{BZ}$.
\end{Remark}

Our paper is organised as follows. Firstly, we will give the necessary forms of the geodesic for $D_{n,1}$  in the
sense of Kobayashi pseudometric; Secondly, using this forms, we will calculate  explicitly the Kobayashi pseudometric of $D_{1,1}$; Lastly, we prove the Schwarz lemma at the boundary for holomorphic mappings between the nonequidimensional Fock-Bargmann-Hartogs domains by using the formulas for the Kobayashi pseudometric on $D_{1,1}$.

\section{The geodesic for $D_{n.1}$}
Although $D_{n,1}$ is an unbounded domain, we can also obtain a similar result of Lemma 8 in \cite{Plug-Zwonek} as follows:
\begin{Lemma}\label{Le2.2}
Suppose that $\varphi=(\varphi_1,\ldots, \varphi_{n+1}): E\rightarrow D_{n,1}$ is a $\mathcal{K}_{D_{n,1}}$-geodesic for $(\varphi(0),\varphi'(0))$ with
$\varphi'(0)\neq 0$. Then we have

$(1)$ $\varphi_{n+1}(\lambda)$ can be decomposed as
$$\varphi_{n+1}(\lambda)=B(\lambda)A(\lambda)$$
where $B(\lambda)$ is a Blaschke product and $A(\lambda)$ is a nowhere vanishing function from $H^\infty(E)$.

$(2)$ Moreover, let $\mathcal{Z}$ be the zeros of $B(\lambda)$ and  denote $\mathcal{\widetilde{Z}}$ a subset of $\mathcal{Z}$. Let us associate $\mathcal{\widetilde{Z}}$ with the Blaschke product $\widetilde{B}$. Consider the following mapping
$$\widetilde{\varphi}:=(\varphi_1, \ldots, \varphi_n, \widetilde{B}A).$$
If $\widetilde{\varphi}$ is non-constant, then $\widetilde{\varphi}$ is a $\mathcal{K}_{D_{n,1}}$-geodesic for $(\widetilde{\varphi}(0),\widetilde{\varphi}'(0))$ and $\widetilde{\varphi}'(0)\neq0$.
\end{Lemma}

\begin{proof}[Proof]
Since $\varphi=(\varphi_1, \ldots, \varphi_{n+1}): E\rightarrow D_{n,1}$ implies that $\vert\varphi_{n+1}(\lambda)\vert<1$.
Hence $\varphi_{n+1}(\lambda)\in H^\infty(E)$. Therefore, we get the conclusion $(1)$ in Lemma \ref{Le2.2}  by the decomposition theorem (see \cite{Garnett}).

Now, we will give the conclusion $(2)$ in Lemma \ref{Le2.2}. We firstly prove that $\widetilde{\varphi}(E)\subseteq D_{n,1}$. We do the proof by repeating inductively the following procedure. We divide $\varphi_{n+1}(\lambda)$ by the Blaschke factor assigned to a zero $\mathcal{Z}\backslash \mathcal{\widetilde{Z}}$. We proceed so till we have exhausted all the set $\mathcal{Z}\backslash \mathcal{\widetilde{Z}}$. That means $\widetilde{\varphi}$ can be obtained by the above procedure. Let $\rho$ be a subharmonic function defined by
$$\rho(z,w)=\vert z_1\vert^2+\ldots+\vert z_n\vert^2+\ln {\vert w\vert}^2.$$

In view of the maximum principle for the subharmonic functions applied to the composition of the mappings obtained from $\varphi$ after a finite number of steps of the above mentioned procedure with $\rho$, consequently the limit function of the composition is not larger than $0$. We can see that this composition is not larger than $0$ on $E$. That means $\rho\circ \widetilde{\varphi}\leq0$ on $E$. Since $\widetilde{\varphi}$ is non-constant and $D_{n,1}$ is strongly pseudoconvex, it follows that $\rho\circ \widetilde{\varphi}<0$ on $E$.

If $\widetilde{\varphi}$ is not a $\mathcal{K}_{D_{n,1}}$-geodesic for $(\widetilde{\varphi}(0),\widetilde{\varphi}'(0))$, then there exists a map
$\widetilde{\psi}=(\widetilde{\psi}_1,\ldots, \widetilde{\psi}_{n+1})$ such that
$$\widetilde{\psi}(0)=\widetilde{\varphi}(0),\; \widetilde{\psi}'(0)=\widetilde{\varphi}'(0),\;\widetilde{\psi}(E)\subset\subset D_{n,1},$$

Consider the mapping $\psi=(\psi_1,\ldots, \psi_{n+1})$ where $\psi_j$ are defined by
$$\psi_j(\lambda)=\widetilde{\psi_j}(\lambda),\;\psi_{n+1}(\lambda)=\frac{B(\lambda)}{\widetilde{B}(\lambda)}\widetilde{\psi}_{n+1}(\lambda).$$

It follows
$$\psi(0)=\varphi(0),\;\psi'(0)=\varphi'(0)$$
and moreover $\psi(E)\subset\subset D_{n,1}$.  This contradicts the fact that $\varphi=(\varphi_1,\ldots,\varphi_{n+1}): E\rightarrow D_{n,1}$ be a $\mathcal{K}_{D_{n,1}}$-geodesic for $(\varphi(0),\varphi'(0))$.

Lastly, we prove that $\widetilde{\varphi}'(0)\neq0$. In fact, if $\widetilde{\varphi}'(0)=0$, then the map $\phi(\lambda)=(\phi_1,\ldots,\phi_{n+1})$ where $$\phi_j(\lambda)=\varphi_j(0),\; \phi_{n+1}(\lambda)=\frac{B(\lambda)}{\widetilde{B}(\lambda)}\widetilde{\varphi}_{n+1}(0)$$
is also a $\mathcal{K}_{D_{n,1}}$-geodesic for $(\varphi(0),\varphi'(0))$. However $\phi(\lambda)\subset\subset D_{n,1}$. This is a contradiction.
\end{proof}

Using Lemma \ref{Le2.2}, we can explicitly describe the $\mathcal{K}_{D_{n,1}}$-geodesic as follows.

\begin{proof}[Proof of Theorem \ref{Th1.3}]
Firstly, by Lemma \ref{Le2.2}, we know that $\widetilde{\varphi}$ is a $\mathcal{K}_{D_{n,1}}$-geodesic for $(\widetilde{\varphi}(0),\widetilde{\varphi}'(0))$ where $\widetilde{\varphi}(\lambda)=(\varphi(\lambda), A(\lambda))$ and $A(\lambda)$ is nowhere vanishing function under the condition that $\widetilde{\varphi}$ is not a constant. Now we consider the following map
$$g_j(\lambda)=\varphi_j(\lambda),\; g_{n+1}(\lambda)=-2i\ln {A(\lambda)}.$$
Then $g=(g_1,\ldots, g_{n+1})$ is a $\mathcal{K}_{\Omega}$-geodesic for $(g(0),g'(0)),$  where $\Omega$ is defined by
$$\Omega:=\big\{(z,w)\in\mathbb{C}^{n+1}:\text{Im } w>\vert z_1\vert^2+\ldots+\vert z_n\vert^2\big\}.$$
Otherwise, there will be a mapping $f(\lambda)=(f_1(\lambda),\ldots, f_{n+1}(\lambda))$ fulfilling $f(0)=g(0)$, $f'(0)=g'(0)$ and $f(E)\subset\subset \Omega$. Combined with the definition of $g(\lambda)$, it is not hard to see that the map $\widetilde{f}=(\widetilde{f}_1,\ldots, \widetilde{f}_{n+1})$ defined by
$$\widetilde{f}_j(\lambda)=f_j(\lambda),\;\widetilde{f}_{n+1}(\lambda)=e^{-\frac{1}{2i}f_{n+1}(\lambda)}$$
maps $E$ into $D_{n,1}$. A simple computation implies that
$$\widetilde{f}(0)=\widetilde{\varphi}(0),\;\widetilde{f}'(0)=\widetilde{\varphi}'(0)$$
and $\widetilde{f}(E)\subset\subset D_{n,1}$, a contradiction.

 Now we consider the new mapping $h(\lambda)=(h_1(\lambda),\ldots, h_{n+1}(\lambda))$ derived from composition $g$ with the M\"{o}bius transformation, that is,
 $$h_j(\lambda)=\frac{2g_j(\lambda)}{g_{n+1}(\lambda)+i},\;h_{n+1}{(\lambda)}=\frac{g_{n+1}(\lambda)-i}{g_{n+1}(\lambda)+i}.$$
Then $h(\lambda)$ is a $\mathcal{K}_{\bf{B}^{n+1}}$-geodesic for $(h(0),h'(0))$ with $h'(0)\neq0$, where $\bf{B}^{n+1}\subseteq \mathbb{C}^{n+1}$ denotes the open unit ball. So we get that
$$h_j(\lambda)=\bigg(\frac{\lambda-\alpha_j}{1-\overline{\alpha_j}\lambda}\bigg)^{r_j}\bigg(a_j\frac{1-\overline{\alpha_j}\lambda}{1-\overline{\alpha_0}
\lambda}\bigg),\;\; 1\leq j\leq n+1$$
by Theorem \ref{Th1.1}, where $r_j\in\{0,1\}$ and $r_j=1$ implies $\alpha_j\in E$. Here $\alpha_i,\;a_j$ satisfies \eqref{eq3} and \eqref{eq4}.

Thus, for $1\leq j\leq n$, we have
$$g_j(\lambda)=\frac{ih_j(\lambda)}{1-h_{n+1}(\lambda)},\; g_{n+1}(\lambda)=\frac{i(1+h_{n+1}(\lambda))}{1-h_{n+1}(\lambda)}.$$

It follows that
\begin{equation}\label{gai 1}
\varphi_j(\lambda)=\frac{\big(\frac{\lambda-\alpha_j}{1-\overline{\alpha_j}\lambda}\big)^{r_j}ia_j(1-\overline{\alpha_j}\lambda)}{(1-\overline{\alpha_0}
\lambda)-\big(\frac{\lambda-\alpha_{n+1}}{1-\overline{\alpha_{n+1}}\lambda}\big)^{r_{n+1}}a_{n+1}(1-\overline{\alpha_{n+1}}\lambda)},
\end{equation}
and
\begin{equation}\label{gai 2}
A(\lambda)=\exp\bigg\{-\frac{1}{2}\frac{1-\overline{\alpha_0}
\lambda+\big(\frac{\lambda-\alpha_{n+1}}{1-\overline{\alpha_{n+1}}\lambda}\big)^{r_{n+1}}a_{n+1}(1-\overline{\alpha_{n+1}}\lambda)}{1-\overline{\alpha_0}
\lambda-\big(\frac{\lambda-\alpha_{n+1}}{1-\overline{\alpha_{n+1}}\lambda}\big)^{r_{n+1}}a_{n+1}(1-\overline{\alpha_{n+1}}\lambda)}\bigg\}.
\end{equation}

If $\widetilde{\varphi}$ is a constant, then we can also derive same expressions with some parameters equaling to $0$. Then it remains to determine the formula for the Blaschke product $B(\lambda)$ in $\varphi_{n+1}$.

In order to determine the formula for the Blaschke product $B(\lambda)$ in $\varphi_{n+1}$, we will use the notation ``stationary" to complete our proof. In the sequel, we mainly focus our attention on a smooth strongly pseudoconvex domain $D\subseteq \mathbb{C}^n$.
\begin{Lemma}(See \cite{Pa})
Let $D$ be a smooth strongly pseudoconvex domain in $\mathbb{C}^n$. Suppose that $\varphi\in \textrm{Hol}(E,D)\cap C^k(\overline{E},\overline{D})$ with $\varphi(\partial E)\subseteq \partial D$ $(k\geq2)$ and $\varphi$ is a proper emdedding. Then for all $\lambda\in \partial E$, the function defined by
\begin{equation}
\lambda\mapsto \lambda \frac{\partial \rho}{\partial z}(\varphi(\lambda))\bullet \varphi'(\lambda)
\end{equation}
is a positive $C^{k-1}$ function, where $\rho$ denote the defining function of $D$, and
$$w\bullet z= w_1z_1+\ldots+w_nz_n$$
for $z=(z_1,\ldots,z_n),\;w=(w_1,\ldots,w_n)$.
\end{Lemma}
\begin{Definition}\label{Def3.1} (See \cite{Pa})
With the assumptions above, we will denote by $p\in C^{k-1}(\partial E)$ the positive function defined by
\begin{equation*}
p^{-1}(\lambda)=\lambda \frac{\partial \rho}{\partial z}(\varphi(\lambda))\bullet \varphi'(\lambda).
\end{equation*}
\end{Definition}
With the help of function $p$, we introduce the dual map $\widetilde{\varphi}$ of $\varphi$ as follows.
\begin{Definition}(See \cite{Pa})
Let $\widetilde{\varphi}$ be a $C^{k-1}$ function on $\partial E$ defined by
\begin{equation*}
\widetilde{\varphi}(\lambda):=p(\lambda)\lambda \frac{\partial \rho}{\partial z}(\varphi(\lambda)),\;\; \lambda\in \partial E.
\end{equation*}
\end{Definition}
\begin{Definition}(See \cite{Pa})
A mapping $\varphi$ is said to be stationary if $\varphi$ is a smooth embedding of $\overline{E}$ into $\overline{D}$, holomorphic on $E$ such that $\varphi(\partial E)\subseteq \partial D$ and $\widetilde{\varphi}$ extends to a smooth mapping on $\overline{E}$, holomorphic on $E$.
\end{Definition}
\begin{Lemma}\label{Le3.2}(See \cite{Pa})
Let $D$ be a smooth strongly pseudoconvex domain in $\mathbb{C}^n$. Suppose that $\varphi:\overline{E}\rightarrow \overline{D}$ is a $C^\infty$ mapping with $\varphi(\partial E)\subseteq \partial D$, $\varphi(0)\in D$, which is holomorphic on $E$. Then if $\varphi$ is a $\mathcal{K}_{D}$-geodesic for $(\varphi(0),\varphi'(0))$, then $\varphi$ is stationary.
\end{Lemma}
\begin{Remark}
Actually, under the assumption that $\varphi$ is $C^\infty$ on $\overline{E}$, Lemma \ref{Le3.2} holds for a smooth strongly pseudoconvex domain $D$, whether $D$ is bounded or not.
\end{Remark}
\begin{Lemma}\label{Le3.3}(See \cite{G})
Assume that $f\in H^{1}(E)$ is a mapping such that
$$\frac{1}{\lambda}f^*(\lambda)>0$$
for almost $\lambda \in \partial E$. Then there exists a $r>0$  and $\alpha\in \overline{E}$ such that
$$f(\lambda)=r(\lambda-\alpha)(1-\overline{\alpha}\lambda),\; \lambda \in E.$$
\end{Lemma}

In view of Lemma \ref{Le2.2} it suffices to discuss the case when the Blaschke product of $\varphi_{n+1}$ has at most a finite number of zeros, then under the assumption of finite zeros, we get the result as follows.
\begin{Lemma}\label{Le2.4}
Let $\varphi=(\varphi_1,\ldots,\varphi_{n+1}): E\rightarrow D_{n,1}$ be a $\mathcal{K}_{D_{n,1}}$-geodesic for $(\varphi(0),\varphi'(0))$, and assume that $\varphi_{n+1}$ has at most a finite number of zeros. Then $\varphi$ extends smoothly onto the closure $\overline{E}$ and $\varphi(\partial E)\subseteq \partial D_{n,1}$.
\begin{proof}[Proof of Lemma \ref{Le2.4}]
Now we only need to consider two cases, i.e., $r_{n+1}=0$ and $r_{n+1}=1$. Without loss of generality we can assume that $\varphi(0)=(0,b)$, then we have $\alpha_j=0$ for $1\leq j\leq n$.

When $r_{n+1}=1$, we will prove that the denominators appearing in $\varphi_j$ will never be zero for $\lambda\in \partial E$. In fact, the denominators equal to $0$ mean that
\begin{equation}\label{eq8}
(1-\overline{\alpha_0} \lambda)-(\lambda-\alpha_{n+1})a_{n+1}=0.
\end{equation}
Combining with \eqref{eq4}, we can get
$$1+a_{n+1}\alpha_{n+1}=\lambda a_{n+1}(\overline{a_{n+1}}\overline{\alpha_{n+1}}+1).$$
If there exists $\lambda_0\in \partial E$ such that \eqref{eq8} holds, then it is easy to see
\begin{equation*}
\vert1+a_{n+1}\alpha_{n+1}\vert\times \vert\vert a_{n+1}\vert-1 \vert=0,
\end{equation*}
which implies that
$$1+a_{n+1}\alpha_{n+1}=0, \textrm{ or } \vert a_{n+1}\vert-1=0.$$
If $1+a_{n+1}\alpha_{n+1}=0$, then $\alpha_0=-\overline{a_{n+1}}$. Hence we have $a_j=0$ by \eqref{eq4} for $1\leq j\leq n$. This contradicts the fact $a_j\in \mathbb{C}_{*}$. Similarly, if $\vert a_{n+1}\vert-1=0$, we can also obtain $a_j=0$ for $1\leq j\leq n$. This is a contradiction. Hence for any $\lambda \in \partial E$, \eqref{eq8} will never hold.

Now we consider the case $r_{n+1}=0$. Then the denominators are equal to $0$ if and only if
\begin{equation}\label{eq9}
(1-\overline{\alpha_0}
\lambda)-a_{n+1}(1-\overline{\alpha_{n+1}}\lambda)=0.
\end{equation}
This means that
$$1-a_{n+1}=a_{n+1}\overline{\alpha_{n+1}}\lambda(\overline{a_{n+1}}-1).$$
If there exists $\lambda_{0}\in \partial E$ such that \eqref{eq9} holds, then we have
$$\vert 1-a_{n+1}\vert(\vert a_{n+1}\overline{\alpha_{n+1}}\vert-1)=0.$$
Therefore we obtain
$$a_{n+1}=1,\textrm{ or }\vert a_{n+1}\overline{\alpha_{n+1}}\vert=1.$$
If $a_{n+1}=1$, then \eqref{eq3} and \eqref{eq4} imply that $\alpha_0=\alpha_{n+1}$ and $a_j=0$ for $1\leq j\leq n$, a contradiction. If $\vert a_{n+1}\overline{\alpha_{n+1}}\vert=1$, then we can also get $a_j=0$ for $1\leq j\leq n$. This is also a contradiction. In conclusion, the mapping $\varphi$ can be extended smoothly onto $\overline{E}$.

In the following, we will show that $\varphi(\partial E)\subseteq \partial D_{n,1}$. Notice that $\vert B(e^{i\theta})\vert=1$ for $B(\lambda)$ has at most a finite number of zeros. Together with the form $\varphi_j(\lambda)$ in \eqref{gai 1} and $A(\lambda)$ in \eqref{gai 2}, we conclude that $\varphi(\partial E)\subseteq \partial D_{n,1}$.
\end{proof}
\end{Lemma}

Now we will finish the proof for Theorem \ref{Th1.3}. Let $\varphi=(\varphi_1,\ldots, \varphi_{n+1}): E\rightarrow D_{n,1}$ be a $\mathcal{K}_{D_{n,1}}$-geodesic for $(\varphi(0),\varphi'(0))$, and by Lemma \ref{Le2.2} we can assume that $\varphi_{n+1}$ has at most a finite number of zeros. Then by Lemma \ref{Le3.2} and \ref{Le2.4}, we conclude that $\varphi$ is stationary. Therefore we have for $1\leq j\leq n$,
$$\frac{1}{\lambda}\widetilde{\varphi_j}(\lambda)\varphi_j(\lambda)=p(\lambda)\vert \varphi_j(\lambda)\vert^2>0,\; \lambda \in \partial E,$$
$$\frac{1}{\lambda}\widetilde{\varphi_{n+1}}(\lambda)\varphi_{n+1}(\lambda)=p(\lambda)>0,\; \lambda \in \partial E.$$
Then Lemma \ref{Le3.3} implies that there exists a $s_j>0$  and $\gamma_j\in \overline{E}$ $(1\leq j\leq n+1)$ such that
$$\widetilde{\varphi_j}(\lambda)\varphi_j(\lambda)=s_j(\lambda-\gamma_j)(1-\overline{\gamma_j}\lambda),\; \lambda \in E.$$
It follows that $\varphi_j$ has at most one zero $\gamma_j$ in $E$. Therefore we get $\alpha_j=\gamma_j$ for $1\leq j\leq n$. Combining the above equations, then we have that for $\lambda\in\partial E$,
$$s_{n+1}\vert1-\overline{\gamma_{n+1}}\lambda \vert^2\vert \varphi_j(\lambda)\vert^2=s_j\vert1-\overline{\gamma_j}\lambda \vert^2.$$
This yields that for $\lambda\in\partial E$,
\begin{equation}
s_{n+1}\vert1-\overline{\gamma_{n+1}}\lambda \vert^2\vert a_j\vert^2\vert1-\overline{\alpha_j}\lambda\vert^2=s_j\vert1-\overline{\gamma_j}\lambda \vert^2
\vert(1-\overline{\alpha_0}
\lambda)-\big(\frac{\lambda-\alpha_{n+1}}{1-\overline{\alpha_{n+1}}\lambda}\big)^{r_{n+1}}a_{n+1}(1-\overline{\alpha_{n+1}}\lambda)\vert^2.
\end{equation}
If $r_{n+1}=1$, it follows that
\begin{equation*}
\gamma_{n+1}=\frac{\alpha_0+\overline{a_{n+1}}}{1+\overline{a_{n+1}\alpha_{n+1}}}.
\end{equation*}
Similarly, if $r_{n+1}=0$, then we have
\begin{equation*}
\gamma_{n+1}=\frac{\alpha_0-\overline{a_{n+1}}\alpha_{n+1}}{1-\overline{a_{n+1}}}.
\end{equation*}
This completes the proof for Theorem \ref{Th1.3}.
\end{proof}

\section{The Kobayashi pseudometric on $D_{1,1}$}

In this section, we mainly compute the Kobayashi pseudometric on $D_{n,m}$ with $n=m=1$. Because of the invariance of the Kobayashi pseudometric under the biholomorphic mappings, we just need to show the explicit form of Kobayashi pseudometric at $((0,b),(X,Y))$. Firstly we assume that $b>0$ and $X\neq 0$. Now we consider the mappings of the following forms
\begin{equation}\label{eq16}
\varphi(\lambda)=\bigg(\frac{ia_1\lambda}{(1-\overline{\alpha_0}
\lambda)-a_2(1-\overline{\alpha_2}\lambda)},\exp\bigg\{-\frac{1}{2}\frac{(1-\overline{\alpha_0}
\lambda)+a_2(1-\overline{\alpha_2}\lambda)}{(1-\overline{\alpha_0}
\lambda)-a_2(1-\overline{\alpha_2}\lambda)}\bigg\} \bigg),
\end{equation}
\begin{equation}\label{eq17}
\varphi(\lambda)=\bigg(\frac{ia_1\lambda}{(1-\overline{\alpha_0}
\lambda)-a_2(1-\overline{\alpha_2}\lambda)},\bigg(\frac{\lambda-\frac{\alpha_0-\overline{a_2}\alpha_2}{1-\overline{a_2}}}
{1-\overline{\frac{\alpha_0-\overline{a_2}\alpha_2}{1-\overline{a_2}}}\lambda}\bigg)\exp\bigg\{-\frac{1}{2}\frac{(1-\overline{\alpha_0}
\lambda)+a_2(1-\overline{\alpha_2}\lambda)}{(1-\overline{\alpha_0}
\lambda)-a_2(1-\overline{\alpha_2}\lambda)}\bigg\} \bigg),
\end{equation}
\begin{equation}\label{eq18}
\varphi(\lambda)=\bigg(\frac{ia_1\lambda}{(1-\overline{\alpha_0}
\lambda)-a_2(\lambda-\alpha_2)},\exp\bigg\{-\frac{1}{2}\frac{(1-\overline{\alpha_0}
\lambda)+a_2(1-\overline{\alpha_2}\lambda)}{(1-\overline{\alpha_0}
\lambda)-a_2(\lambda-\alpha_2)}\bigg\} \bigg),
\end{equation}
\begin{equation}\label{eq19}
\varphi(\lambda)=\bigg(\frac{ia_1\lambda}{(1-\overline{\alpha_0}
\lambda)-a_2(\lambda-\alpha_2)},\bigg(\frac{\lambda-\frac{\alpha_0-\overline{a_2}\alpha_2}{1-\overline{a_2}}}
{1-\overline{\frac{\alpha_0-\overline{a_2}\alpha_2}{1-\overline{a_2}}}\lambda}\bigg)\exp\bigg\{-\frac{1}{2}\frac{(1-\overline{\alpha_0}
\lambda)+a_2(1-\overline{\alpha_2}\lambda)}{(1-\overline{\alpha_0}
\lambda)-a_2(\lambda-\alpha_2)}\bigg\} \bigg),
\end{equation}
such that
\begin{equation*}
\varphi(0)=(0,b),\; \tau \varphi'(0)=(X,Y),
\end{equation*}
where $\alpha_0,\;\alpha_j,\;a_j$ fulfill the relations \eqref{eq3} and \eqref{eq4}.


In the sequel, we will compute $\varphi(0)$ and $\varphi'(0)$ case by case. Firstly, if $\varphi$ is of the form \eqref{eq16}, then we have
\begin{equation*}
\left\{
\begin{aligned}
b&=\exp{\frac{-(1+a_2)}{2(1-a_2)}},\\
X&=\frac{\tau ia_1}{1-a_2},\\
Y&=\tau\frac{a_2(\overline{\alpha}_2-\overline{\alpha}_0)}{(1-a_2)^2}\cdot\exp{\frac{-(1+a_2)}{2(1-a_2)}}.
\end{aligned}
\right.
\end{equation*}
Direct computations imply that
\begin{equation}\label{eq20}
\tau^2=-\frac{1}{2\ln b}\bigg(\vert X\vert^2-\frac{\vert Y\vert^2}{2b^2\ln b}\bigg)
\end{equation}
by \eqref{eq3} and \eqref{eq4}.

If $\varphi$ is of the form \eqref{eq17}, then we obtain
\begin{equation*}
\left\{
\begin{aligned}
b&=-\exp{\frac{-(1+a_2)}{2(1-a_2)}}\cdot\frac{\alpha_0-\overline{a_2}\alpha_2}{1-\overline{a_2}},\\
X&=\frac{\tau ia_1}{1-a_2},\\
Y&=\tau\bigg(-\exp{\frac{-(1+a_2)}{2(1-a_2)}}\cdot\frac{\alpha_0-\overline{a_2}\alpha_2}{1-\overline{a_2}}
\frac{a_2(\overline{\alpha}_2-\overline{\alpha}_0)}{(1-a_2)^2}+
(1-\frac{\vert\alpha_0-\overline{a_2}\alpha_2\vert^2}{\vert1-\overline{a_2}\vert^2})\exp{\frac{-(1+a_2)}{2(1-a_2)}}\bigg).
\end{aligned}
\right.
\end{equation*}
After a complicated computation, we have
\begin{equation}\label{eq21}
\tau^2=\frac{\vert X\vert^2}{\alpha(1-b^2e^\alpha)},\;(v\geq 4b^2)
\end{equation}
by \eqref{eq3} and \eqref{eq4}, where $v=\frac{\vert Y\vert^2}{\vert X\vert^2}$ and $\alpha\in (0,-2\ln b)$ is the solution of the following equation
$$\frac{b^4t^2e^t+(1-b^2e^t)^2e^{-t}}{t(1-b^2e^t)}=v-2b^2.$$

If $\varphi$ is of the form \eqref{eq18}, similar to the above arguments, we still have
$$\tau^2=-\frac{1}{2\ln b}\bigg(\vert X\vert^2-\frac{\vert Y\vert^2}{2b^2\ln b}\bigg).$$

If $\varphi$ is of the form \eqref{eq19}, analogously, we obtain
$$\tau^2=\frac{\vert X\vert^2}{\alpha(1-b^2e^\alpha)},\;(v\geq 4b^2),$$
where $v=\frac{\vert Y\vert^2}{\vert X\vert^2}$ and $\alpha\in (0,-2\ln b)$ is the solution of the following equation
$$\frac{b^4t^2e^t+(1-b^2e^t)^2e^{-t}}{t(1-b^2e^t)}=v-2b^2.$$

Therefore, in order to obtain the explicit form of Kobayashi pseudometric, we only need to compare \eqref{eq20} with \eqref{eq21}.

\begin{proof}[Proof of Theorem \ref{Th1.6} ]
When $b=0$, it is well known that $\mathcal{K}_{D_{1,1}}((0,0),(X,Y))$ is Minkowski functional $\mu(X,Y)$ with $\vert Y\vert\neq 0$ which is uniquely determined by
$$\frac{\vert X\vert^2}{\mu^2}+\ln{\vert Y\vert^2}-2\ln \mu=0,$$
while $\mathcal{K}_{D_{1,1}}((0,0),(X,0))=\mathcal{K}_{\mathbb{C}}(0,X)=0$. Then we prove $(i)$.

For the case $X=0$, $b\neq 0$, we first notice the fact that
$$\{0\}\times {\mathbb{B}} \subseteq D_{1,1}\subseteq \mathbb{C}\times\mathbb{B}.$$
Hence we have
$$\mathcal{K}_{B}(b,Y)\leq\mathcal{K}_{D_{1,1}}((0,b),(0,Y))\leq\mathcal{K}_{B}(b,Y).$$
Then we prove $(ii)$.

If $v<4b^2$, then the forms of $\varphi(\lambda)$ is only \eqref{eq16} and \eqref{eq18}. Hence we have
$$\mathcal{K}_{D_{1,1}}^2((0,b),(X,Y))=-\frac{1}{2\ln b}\bigg(\vert X\vert^2-\frac{\vert Y\vert^2}{2b^2\ln b}\bigg).$$

If $v\geq4b^2$, we should compare \eqref{eq20} with \eqref{eq21}. Let $g(t)$ be a function on $[0,-2\ln b]$ defined as follows.
\begin{equation*}
g(t)=-\frac{1}{2\ln b}t(1-b^2e^t)e^t+\frac{1}{4b^2\ln^2 b}\big(b^2te^t+(1-b^2e^t)\big)^2-e^t.
\end{equation*}

It is not hard to see that
$$g(0)=\frac{(1-b^2)^2}{4b^2\ln^2 b}-1>0,\;g(-2\ln b)=0$$
for $b\in (0,1)$. Furthermore, the derivative of $g(t)$ can be computed explicitly as follow
\begin{equation*}
g'(t)=\frac{e^t}{2\ln^2 b}\big(-\ln b\cdot(t+1)+e^t(2b^2t\ln b+b^2\ln b+b^2t^2-b^2t)+t-2\ln^2b\big).
\end{equation*}

Let $\phi(t)$ be defined by
\begin{equation*}
\phi(t)=-\ln b\cdot(t+1)+e^t(2b^2t\ln b+b^2\ln b+b^2t^2-b^2t)+t-2\ln^2b.
\end{equation*}

Then we have
\begin{equation*}
\phi'(t)=1-\ln b+b^2e^t\big(t^2+(2\ln b+1)t+3\ln b-1\big).
\end{equation*}

It follows
\begin{equation*}
\phi''(t)=b^2e^t\big(t^2+(2\ln b+3)t+5\ln b\big),\;t\in[0,-2\ln b].
\end{equation*}

Then we have $\phi'(t)$ is monotone decreasing on $[0,\frac{-(2\ln b+3)+\sqrt{(2\ln b-2)^2+5}}{2}]$ and monotone increasing on$[\frac{-(2\ln b+3)+\sqrt{(2\ln b-2)^2+5}}{2},-2\ln b]$. It is easy to see that $\phi'(-2\ln b)=0$ and
$$\phi'(0)=1-\ln b+b^2(3\ln b-1)>0.$$

Hence there exists only one $\alpha_0$ on $[0,-2\ln b]$ such that $\phi'(\alpha_0)=0$. That means $\phi(t)$ is monotone increasing on $[0,\alpha_0]$ and monotone decreasing on $[\alpha_0,-2\ln b]$. Since $\phi(0)=-\ln b+b^2\ln b-2\ln^2b<0$ and $\phi(-2\ln b)=0$, we can find a $\alpha_1\in (0,\alpha_0)$ such that $\phi(\alpha_1)=0$. This yields that $g(t)$ is monotone decreasing on $[0,\alpha_1]$ and monotone increasing on $[\alpha_1,-2\ln b]$. Combining with the fact $g(0)>0$ and $g(-2\ln b)=0$, we conclude there exists only one $\beta\in (0,\alpha_1)$ such that $g(\beta)=0$ and therefore $g(t)>0$ on $[0,\beta]$ and $g(t)<0$ on $[\beta,-2\ln b]$.

\begin{figure}[h]
\begin{center}
\scalebox{1.0}{\includegraphics[height=90mm]{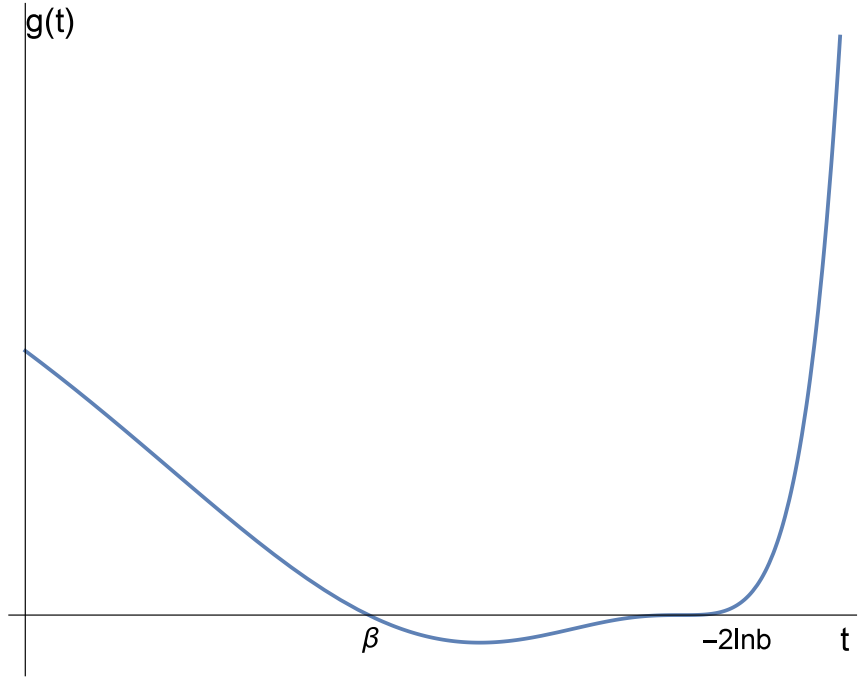}}
\vskip 4mm
\end{center}
\caption{The figure for $g(t)$}
\end{figure}
The proof is complete. \end{proof}

\newpage

\section{Boundary Schwarz Lemma}

The Fock-Bargmann-Hartogs domain $D_{n,m}$
 in $\mathbb{C}^{n+m}$ is defined by the inequality
 $$\rho_{D_{n,m}}(z,w):=\|w\|^2-e^{-\|z\|^2}<0,\;\;  (z,w)\in \mathbb{C}^n\times \mathbb{C}^m.$$
Let $p\in\partial D_{n,m}$, the tangent space $T_p({\partial D_{n,m}})$ to $\partial D_{n,m}$ at $p$ is defined by
\begin{equation*}
T_p(\partial D_{n,m})=\{(\alpha,\beta)\in\mathbb{C}^{n+m},\textrm{Re}\left(\sum_{k=1}^m\overline{w}_k\beta_k+
\sum\limits_{j=1}^ne^{-\Vert z\Vert^2}\overline{z}_j\alpha_j\right)=0\}
\end{equation*}
and the complex tangent space $T_p^{1,0}({\partial D_{n,m}})$ is defined by
\begin{equation*}
T_p^{1,0}({\partial D_{n,m}})=\{(\alpha,\beta)\in\mathbb{C}^{n+m},\sum_{k=1}^m\overline{w}_{k}\beta_k+
\sum\limits_{j=1}^ne^{-\Vert z\Vert^2}\overline{z}_j\alpha_j=0\}.
\end{equation*}
By the homogeneity of the boundary of $D_{n,m}$ under its automorphism, we fix $p=(0,\ldots,0,1,0,\ldots,0)\in \partial D_{n,m}$. Then we have
$$T_p(\partial D_{n,m})=\{(\alpha,\beta)\in\mathbb{C}^{n+m},\textrm{Re}\beta_1=0\}$$
and
$$T_p^{1,0}(\partial D_{n,m})=\{(\alpha,\beta)\in\mathbb{C}^{n+m},\beta_1=0\}.$$
Clearly we get
$$T_p^{1,0}(\partial D_{1,1})=\{(\alpha,\beta)\in\mathbb{C}^{2},\beta=0\}.$$

\begin{proof}[Proof of Theorem \ref{Th1.8}] Let
$$F=(f,h):=(f_1(z,w),\cdots, f_n(z,w); h_1(z,w),\cdots, h_m(z,w)):D_{1,1}\rightarrow D_{n,m}$$
 be a holomorphic mapping and holomorphic at $p$ with $F(p)=q$,
where $p=(0,1)\in \partial D_{1,1}$ and $q=(0,\ldots,0;1,0,\ldots,0)\in \partial D_{n,m}$.
In fact, since $F$ is holomorphic at $p$, there exists a neighborhood $V$ of $p$ such that $F$ is holomorphic in $V$.

For any $\alpha\in T_p(\partial D_{1,1})$, we can choose a smooth curve $\gamma:[-1,1]\rightarrow \partial D_{1,1}$ such that $\gamma(0)=p$, $\gamma'(0)=\alpha$, and $\gamma([-1,1])\subset V$. Hence,
$$\max\limits_{t\in[-1,1]}\rho_{D_{n,m}}(F(\gamma(t)))=0=\rho_{D_{n,m}}(F(\gamma(0))),$$
which implies
$$\frac{d}{d t}\rho_{D_{n,m}}(F(\gamma(t)))\vert_{t=0}=2\textrm{Re}\frac{\partial \rho_{D_{n,m}}}{\partial z}(F(p))J_F(p)\alpha^T=0.$$
This is,
\begin{equation*}
J_F(p)\alpha^T\subseteq (T_q(\partial D_{n,m}))^T.
\end{equation*}

Notice that for $\alpha\in T^{1,0}_p(\partial D_{1,1})$, we have $e^{i\theta}\alpha\in T^{1,0}_p(\partial D_{1,1})(\subset  T_p(\partial D_{1,1}))$ for any $\theta\in \mathbb{R}$. Then we obtain
$$2\textrm{Re}(e^{i\theta}\frac{\partial \rho_{D_{n,m}}}{\partial z}(F(p))J_F(p)\alpha^T)=0$$
for any $\theta\in \mathbb{R},$   which means
$$\frac{\partial \rho_{D_{n,m}}}{\partial z}(F(p))J_F(p)\alpha^T=0.$$
That is,
\begin{equation}\label{5500}
J_F(p)(T^{1,0}_p(\partial D_{1,1}))^T\subseteq (T^{1,0}_q(\partial D_{n,m}))^T.
\end{equation}

Thus, by (\ref{5500}), we have
\begin{equation}\label{551}
\frac{\partial h_1}{\partial z}(p)=0.
\end{equation}
On the other hand, define the holomorphic function $g(\zeta ):=h_1(\zeta p):E\rightarrow E$ (where E is the open unit
disk). Thus $g$ is holomorphic at $\zeta=1$ with $g(1)=1$. Hence, by Theorem \ref{Garnett},  we get
\begin{equation}\label{552}
\lambda:=\frac{\partial h_1}{\partial w}(p)=g'(1)\geq\frac{\vert1-\overline{h_1(0)}\vert^2}{1-\vert h_1(0)\vert^2}>0.
\end{equation}
Together with (\ref{551}) and (\ref{552}), we see that the real number $\lambda$ satisfies
\begin{equation*}
\overline{J_F(p)}^{T}q^{T}=\lambda p^T.
\end{equation*}

Now we consider the vector $\beta=(\alpha,0)\in T_p^{1,0}(\partial D_{1,1})$. It is well known that (see \cite{Royden})
\begin{equation}\label{eq25}
\mathcal{K}_{D_{1,1}}(bp,\beta) \geq  \mathcal{K}_{D_{n,m}}(F(bp),\beta (J_F(bp))^T)
\end{equation}
for any $0\leq b<1$.
Denote $\Omega :=\{(z,w)\in\mathbb{C}^{n}\times\mathbb{B}^{m}, \Vert z\Vert^2+\sum_{j=2}^{m}\vert w_j\vert^2+\ln(\vert w_1\vert^2)<0\}$. Thus we have
\begin{equation*}
D_{n,m}\subset\Omega\subset D_{n+m-1,1}.
\end{equation*}
This means
\begin{equation}\label{eq250}
\begin{aligned}
\mathcal{K}_{D_{n,m}}(F(bp),\beta (J_F(bp))^T)&\geq\mathcal{K}_{\Omega}(F(bp),\beta (J_F(bp))^T)\\
&\geq\mathcal{K}_{D_{n+m-1,1}}(F(bp),\beta (J_F(bp))^T).\\
\end{aligned}
\end{equation}
Now take the automorphism of $D_{n,m}$ as follows:
\begin{equation*}
\Phi:(z,w)\mapsto(z-f(bp),e^{\langle z,f(bp)\rangle-\frac{1}{2}{\Vert f(bp)\Vert}^{2}}w ).
\end{equation*}
Then we have
$$\Phi(F(bp))=(0,\ldots,0,e^{\frac{1}{2}{\Vert f(bp)\Vert}^{2}}h(bp)),$$
\begin{equation*}
J_{\Phi\circ{F}}(bp)=
\left(
  \begin{array}{cc}
    I_n & 0 \\
    e^{\frac{1}{2}{\Vert f(bp)\Vert}^{2}}{\overline{f(bp)}}^Th(bp) & e^{\frac{1}{2}{\Vert f(bp)\Vert}^{2}}I_m\\
  \end{array}
\right).
\end{equation*}
Therefore, we get
\begin{equation}\label{571}
\begin{aligned}
\mathcal{K}_{D_{n+m-1,1}}(F(bp),\beta(J_F(bp))^T)&=\mathcal{K}_{D_{n+m-1,1}}(\Phi\circ F(bp),\beta (J_{\Phi\circ F}(bp))^T)\\
&\geq\mathcal{K}_{D_{1,1}}((0,X(b)),(\vert {t}\vert,e^{i\theta} Y(b))),
\end{aligned}
\end{equation}
where
\begin{equation*}
\left\{
\begin{aligned}
X(b)&=e^{\frac{1}{2}({\Vert f(bp)\Vert}^{2}+\sum_{j=2}^{m}\vert h_j(bp)\vert^2)} \vert {h_1}(bp)\vert,\\
\vert {t}\vert^2&=\vert\alpha\vert^2(\Vert\frac{\partial F}{\partial z}(bp)\Vert^2-\vert\frac{\partial h_1}{\partial z}(bp)\vert^2),\\
Y(b)&=\alpha  e^{\frac{1}{2}({\Vert f(bp)\Vert}^{2}+\sum_{j=2}^{m}\vert h_j(bp)\vert^2)}\left( h_1(bp)(\sum_{k=1}^{n}\overline{f_k(bp)}\frac{\partial f_k}{\partial z}(bp)+\sum_{k=2}^{m}\overline{h_k(bp)}\frac{\partial h_k}{\partial z}(bp))+\frac{\partial h_1}{\partial z}(bp)\right).
\end{aligned}
\right.
\end{equation*}
Now together with \eqref{eq25}, \eqref{eq250} and \eqref{571}, we get
\begin{equation}\label{572}
\mathcal{K}_{D_{1,1}}((0,X(b)),(\vert {t}\vert,e^{i\theta} Y(b)))\leq\mathcal{K}_{D_{1,1}}((0,b),(\alpha,0)).
\end{equation}
Actually we can assume that $\vert {t}\vert^2>0$. Then (see \eqref{def000} for reference)
\begin{equation*}
v:=\frac{\vert Y(b)\vert^2}{\vert {t}\vert^2}\leq 2e^{({\Vert f(bp)\Vert}^{2}+\sum_{j=2}^{m}\vert h_j(bp)\vert^2)}\left(\vert h_1(bp)\vert^2(\Vert f(bp)\Vert^2+\sum_{j=2}^{m}\vert h_j(bp)\vert^2)+\frac{\vert\frac{\partial h_1}{\partial z}(bp)\vert^2}{\Vert\frac{\partial F}{\partial z}(bp)\Vert^2-\vert\frac{\partial h_1}{\partial z}(bp)\vert^2}\right).
\end{equation*}
Since $f(bp)\rightarrow 0$ and $\frac{\partial h_1}{\partial z}(bp)\rightarrow 0$ as $b\rightarrow 1^-$, we have $v\rightarrow 0$ as $b\rightarrow 1^-$. Thus there exist $\delta$ such that $v<4b^2$ for $b\in(\delta,1)$.
Thus,  by putting \eqref{eq22} into \eqref{572}, we have
\begin{equation}\label{5710}
\frac{1}{-2\ln X(b)}\bigg(\vert {t}\vert^2-\frac{\vert Y(b)\vert^2}{2X^{2}(b)\ln X(b)}\bigg)\leq\frac{\vert \alpha\vert^2}{-2\ln b}.
\end{equation}
Note $X(b)\rightarrow 1,\; Y(b)\rightarrow 0, \; \frac{\ln X^2(b)}{\ln b^2}\rightarrow\frac{g'(1)+\bar{g}'(1)}{2}=\lambda>0$ as $b\rightarrow 1^-$, and
$-\frac{\vert Y(b)\vert^2}{2X^{2}(b)\ln X(b)}\geq 0$ (note $0\leq X(b)<1$ for $0\leq b<1$ by $(0,X(b))\in D_{1,1}$).
Thus, from \eqref{5710},  we obtain
\begin{equation*}
\sum_{k=1}^{n}\vert\frac{\partial f_k}{\partial z}(p)\vert^2+\sum_{\ell=2}^{m}\vert\frac{\partial h_k}{\partial z}(p)\vert^2\leq \lambda.
\end{equation*}
That is,
\begin{equation*}
\Vert J_F(p)\Vert_{op}^2=\Vert\frac{\partial F}{\partial z}(p)\Vert^2\leq\lambda.
\end{equation*}
\end{proof}

\vskip 8pt

\noindent\textbf{Acknowledgments}\quad  We sincerely thank the referees for his useful comments. The project is supported
by the National Natural Science Foundation of China (No. 11671306) and the Natural Science Foundation of Shandong Province, China (No. ZR2018BA015).

\addcontentsline{toc}{section}{References}
\phantomsection
\renewcommand\refname{References}

 \end{document}